\g@addto@macro\normalsize{%
  \setlength\abovedisplayskip{2pt}
  \setlength\belowdisplayskip{2pt}
  \setlength\abovedisplayshortskip{4pt}
  \setlength\belowdisplayshortskip{4pt}
}
\numberwithin{equation}{section}
\crefname{section}{Section}{Sections}
\crefname{subsection}{Subsection}{Subsections}
\crefname{condition}{Condition}{Conditions}
\crefname{hypothesis}{Hypothesis}{Hypothesis}
\crefname{assumption}{Assumption}{Assumptions}
\crefname{lemma}{Lemma}{Lemmas}
\crefname{claim}{Claim}{Claims}
\crefname{remark}{Remark}{Remarks}
\newtheorem{theorem}{Theorem}[section]
\newtheorem{lemma}[theorem]{Lemma}
\newtheorem{proposition}[theorem]{Proposition}
\newtheorem{definition}[theorem]{Definition}
\newtheorem{remark}[theorem]{Remark}        
\newtheorem{assumption}[theorem]{Assumption}  
\numberwithin{equation}{section}
\newcommand{\vo}{\vec{o}\@ifnextchar{^}{\,}{}}
\def\YYint#1#2#3{{\setbox0=\hbox{$#1{#2#3}{\iint}$}
    \vcenter{\hbox{$#2#3$}}\kern-.50\wd0}}
\def\XXint#1#2#3{{\setbox0=\hbox{$#1{#2#3}{\int}$}
    \vcenter{\hbox{$#2#3$}}\kern-.50\wd0}}
\def\namedlabel#1#2{\begingroup
   \def\@currentlabel{#2}%
   \label{#1}\endgroup
}
\newcommand{\rmh}[1]{\mathpalette{\raisem@th{#1}}}
\newcommand{\raisem@th}[3]{\hspace*{-1pt}\raisebox{#1}{$#2#3$}}
\newcommand{\descref}[2]{\hyperref[#1]{\textnormal{\textcolor{black}{(}\textcolor{blue}{\bf #2}\textcolor{black}{)}}}}
\newcommand{\dref}[2]{\hyperref[#1]{\textcolor{black}{(}\textcolor{blue}{\bf #2}\textcolor{black}{)}}}
\newcommand\RR{\mathbb{R}}
\newcommand{\al}{\alpha}
\newcommand{\ve}{\varepsilon}
\newcommand{\jinin}{j\in\{1,2,\ldots,N\}}
\newcommand{\Om}{\Omega}
\DeclareMathOperator{\spt}{spt}
\DeclareMathOperator{\diam}{diam}
\DeclareMathOperator{\dist}{dist}
\DeclareMathOperator{\loc}{loc}
\newcommand{\iprod}[2]{\langle #1 ,  #2\rangle}
\newcommand{\lbr}[1][(]{\left#1}
\newcommand{\rbr}[1][)]{\right#1}
\newcommand{\txt}[1]{\qquad \text{#1} \qquad}
\newcounter{whitney}
\newcounter{ineqcounter}
\def\ps@pprintTitle{%
\let\@oddhead\@empty
\let\@evenhead\@empty
\def\@oddfoot{}%
\let\@evenfoot\@oddfoot}
\definecolor{mycolor}{rgb}{0.122, 0.435, 0.698}
\definecolor{babyblue}{rgb}{0.47, 0.27, 0.23}
\begin{document}

\begin{frontmatter}

\title{On Lipschitz regularity for bounded minimizers of functionals with $(p,q)$-growth}

\author[myaddress]{Karthik Adimurthi\tnoteref{thanksfirstauthor}}
\corref{mycorrespondingauthor}
\ead{karthikaditi@gmail.com and kadimurthi@tifrbng.res.in}

\author[myaddress]{Vivek Tewary\tnoteref{thankssecondauthor}}
\ead{vivek2020@tifrbng.res.in and vivektewary@protonmail.com}

\tnotetext[thanksfirstauthor]{Supported by the Department of Atomic Energy,  Government of India, under project no.  12-R\&D-TFR-5.01-0520 and SERB grant SRG/2020/000081}
\tnotetext[thankssecondauthor]{Supported by the Department of Atomic Energy,  Government of India, under project no.  12-R\&D-TFR-5.01-0520}
\cortext[mycorrespondingauthor]{Corresponding author}
\address[myaddress]{Tata Institute of Fundamental Research, Centre for Applicable Mathematics, Bangalore, Karnataka, 560065, India}

\begin{abstract}
We obtain Lipschitz estimates for bounded minimizers of functionals with nonstandard $(p,q)$-growth satisfying the dimension-independent restriction $q<p+2$ with $p \geq 2$. This relation improves existing  restrictions when $p \leq N-1$, moreover our result is sharp in the range $N > \frac{p(2+p)}{2} + 1$. The standard Lipschitz regularity takes the form $W^{1,\infty}_{\loc} - W^{1,p}_{\loc}$, whereas we obtain $W^{1,\infty}_{\loc} - L^{\infty}_{\loc}$ regularity estimate and then make use of existing sharp $L^{\infty}_{\loc}$ bounds to obtain the required conclusion.
\end{abstract}

\begin{keyword}
	nonuniformly elliptic equations, local Lipschitz continuity, $(p,q)$-growth, nonstandard growth conditions 
 \MSC[2020]  35B65, 35J92
\end{keyword}

\end{frontmatter}
\begin{singlespace}
\tableofcontents
\end{singlespace}
\section{Introduction}\label{section0}

Let $\Om\subset\RR^N$ with $N \geq 2$ be a bounded open set and we consider the problem of local regularity of minimizers of 
\begin{equation}\label{functional}
    \mathfrak{F}[u] := \int_{\Om} f(\nabla u) \ dx,
\end{equation}
where $f : \RR^N \mapsto \RR$ is a $C^2$ integrand satisfying $(p,q)$ growth of the form 
\begin{assumption}\label{assumption1}
    Let $2\leq p\leq q<\infty$ and suppose there exist constants $m,M\in(0,\infty)$ such that for  any $z\in\RR^N$ and $\xi\in\RR^N$, the following holds   
\begin{subequations}
\begin{gather}
	m|z|^p\leq f(z)\leq M(1+|z|)^q,\label{hyp1}\\
	m|z|^p\leq \iprod{Df(z)}{z} \txt{and} |Df(z)|\leq M(1+|z|^{q-1}),\label{hyp2}\\
	m|z|^{p-2}|\xi|^2\leq \langle D^2f(z)\xi,\xi\rangle\leq M(1+|z|^{q-2})|\xi|^2.\label{hyp3}
\end{gather}
\end{subequations}
\end{assumption}

\begin{definition}\label{mainprob}
    We say that $U\in W^{1,p}_{\loc}(\Om)$ is a local minimizer of \cref{functional} provided the following two conditions are satisfied:
    \begin{enumerate}[(i)]
        \item $f(\nabla U) \in L^1_{\loc}(\Om)$ and
        \item $\int_{\spt \varphi} f(\nabla u) \ dx \leq \int_{\spt \varphi} f(\nabla u + \nabla \varphi) \ dx$ holds for all $\varphi \in W^{1,p}(\Om)$ with $\spt(\varphi) \Subset \Om$.
    \end{enumerate}

\end{definition}


Our main theorem reads as follows:
\begin{theorem}\label{maintheorem} Let $2\leq p\leq q<\infty$ with $q<p+2$ and  let $U\in W^{1,p}_{\loc}(\Om)\cap L^\infty_{\loc}(\Om)$ be a bounded, local minimizer of $\mathfrak{F}$ as in \cref{mainprob},  then $\nabla U\in L^\infty_{\loc}(\Om)$.
\end{theorem}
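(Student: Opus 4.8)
The plan is to establish an a priori Lipschitz bound along a regularized approximation scheme and then pass to the limit. First I would fix a ball $B_{2R} \Subset \Om$ and mollify/perturb the integrand to obtain a family $f_\ve$ that is uniformly elliptic with bounded Hessian (e.g. $f_\ve(z) = f(z) + \ve(1+|z|^2)^{q/2}$ or an Uhlenbeck-type regularization), so that the corresponding minimizers $u_\ve$ on $B_{2R}$ with boundary data $U$ are smooth by standard theory, satisfy $\|u_\ve\|_{L^\infty(B_{2R})} \le \|U\|_{L^\infty(B_{2R})} + o(1)$ by the maximum principle / comparison, and converge to $U$ weakly in $W^{1,p}$. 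All subsequent estimates must be uniform in $\ve$; the qualitative smoothness of $u_\ve$ is only used to justify differentiating the equation.

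Next I would differentiate the Euler--Lagrange equation $\dv(Df_\ve(\nabla u_\ve)) = 0$ in a direction $x_s$ and test with $\eta^2 \ga(|\nabla u_\ve|)\, \pa_s u_\ve$, where $\eta$ is a cutoff supported in $B_{2R}$ and $\ga$ is an increasing weight to be chosen; after summing over $s$ this yields a Caccioppoli-type inequality for a convex power of $V_\mu(\nabla u_\ve) := (\mu^2 + |\nabla u_\ve|^2)^{(p-2)/4}\nabla u_\ve$. Using ellipticity \cref{hyp3} from below and the upper bound $M(1+|z|^{q-2})$ from above, the ``bad'' term carries the gap exponent $q-p$; the key point that makes $q < p+2$ work is to absorb this by interpolating the extra $|\nabla u_\ve|^{q-2}$ against $|\nabla u_\ve|^{p-2}|D^2 u_\ve|^2$ and an $L^\infty$ term, i.e. to write $|\nabla u_\ve|^{q-2} = |\nabla u_\ve|^{p-2}\cdot|\nabla u_\ve|^{q-p}$ and control $|\nabla u_\ve|^{q-p}$ using $q-p<2$ together with the already-available $L^\infty$ bound on $u_\ve$ (not on $\nabla u_\ve$) through a Sobolev embedding / multiplicative trick. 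This is where the $W^{1,\infty}-L^\infty$ rather than $W^{1,\infty}-W^{1,p}$ philosophy enters: the right-hand side of the iteration will be a power of $\|u_\ve\|_{L^\infty}$ plus lower-order gradient terms, so I would set up a De Giorgi / Moser iteration on the super-level sets of $|\nabla u_\ve|$ (or on $w = (\mu^2+|\nabla u_\ve|^2)^{1/2}$), where at each step the nonlinear gain from Sobolev's inequality beats the loss $q-p$ precisely when $q-p < 2$, independently of $N$, giving $\|\nabla u_\ve\|_{L^\infty(B_R)} \le C(N,p,q,m,M,R,\|U\|_{L^\infty(B_{2R})})$.

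Finally I would remove the regularization: the uniform bound $\sup_\ve \|\nabla u_\ve\|_{L^\infty(B_R)} < \infty$ together with $u_\ve \to U$ in $W^{1,p}(B_{2R})$ forces $\nabla U \in L^\infty(B_R)$ with the same bound, by weak-$*$ lower semicontinuity of the $L^\infty$ norm (after passing to a subsequence with $\nabla u_\ve \rightharpoonup \nabla U$ weak-$*$ in $L^\infty$); one also checks $u_\ve$ is indeed (up to the boundary data) comparable to $U$ so that the local minimality of $U$ is used to identify the limit. Since $B_{2R} \Subset \Om$ was arbitrary, $\nabla U \in L^\infty_{\loc}(\Om)$.

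The main obstacle, and the technical heart of the argument, is the Caccioppoli/iteration step: one must choose the weight $\ga$ and the test function so that the term coming from the upper Hessian bound $M|\nabla u_\ve|^{q-2}|D^2u_\ve|^2$ is genuinely absorbed rather than merely bounded by something of the same size. The delicate bookkeeping is to track how a power $|\nabla u_\ve|^{q-p}$ with $q-p<2$ can be split off and estimated using a fractional Sobolev embedding applied to $V_\mu(\nabla u_\ve)$ on each level set, so that the exponent balance closes \emph{without} any dimensional restriction — this is exactly the improvement over the classical condition $q < p + \tfrac{2p}{N}$ — while the sharpness claim in the stated range must be matched against known counterexamples; handling the constants uniformly in $\ve$ and $\mu \downarrow 0$, and making sure the $L^\infty$ bound on $U$ (and not a spurious $W^{1,p}$ norm) is the only global quantity entering, is where the care lies.
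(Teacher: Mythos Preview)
Your overall architecture (regularize, Caccioppoli for the gradient, iterate, pass to the limit) matches the paper, but the crucial middle step is missing, and your description of how $q<p+2$ enters is not right.

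You claim that in the Moser/De~Giorgi iteration ``the nonlinear gain from Sobolev's inequality beats the loss $q-p$ precisely when $q-p<2$, independently of $N$.'' This is false: the Sobolev gain is $2^*/2=N/(N-2)$, which \emph{is} dimension dependent, and comparing it directly against the gap $q-p$ is exactly what produces the classical restriction $q/p<1+2/N$ (or $1+2/(N-1)$). The Caccioppoli inequality you derive is
\[
\int |\nabla u|^{p-2+\alpha}|\nabla^2 u|^2\eta^2
\;\lesssim\;
\int |\nabla u|^{q+\alpha}|\nabla\eta|^2,
\]
and after Sobolev the left side controls $\bigl(\int|\nabla u|^{(p+\alpha)2^*/2}\bigr)^{2/2^*}$. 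For the iteration $\alpha\mapsto\alpha'$ with $q+\alpha'=(p+\alpha)2^*/2$ to be expansive you need a \emph{starting exponent} $\alpha_0\gtrsim (q-p)(N-2)/2-p$, which can be large; you only have $\nabla u\in L^p$ a priori, so the iteration cannot start. (Also, there is no ``bad'' term $M|\nabla u|^{q-2}|D^2u|^2$ to absorb: the upper Hessian bound only hits $\nabla\eta$, not $\nabla^2 u$.)

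The missing idea, which is where both $q<p+2$ and $\|U\|_{L^\infty}$ actually enter, is a separate \emph{higher integrability} step: for any $\beta\ge2$ one writes
\[
\int|\nabla u|^{p+\beta}\eta^2
=-\int u\,\dv\bigl(|\nabla u|^{p+\beta-2}\nabla u\,\eta^2\bigr)
\;\le\;\|u\|_{L^\infty}\Bigl[(p+\beta-1)\!\int|\nabla u|^{p+\beta-2}|\nabla^2u|\eta^2+2\!\int|\nabla u|^{p+\beta-1}|\eta||\nabla\eta|\Bigr],
\]
trading one power of $\nabla u$ for one factor of $\|u\|_{L^\infty}$. After Young's inequality and the Caccioppoli bound above (with $\alpha=\beta-2$), the right side contains $\int|\nabla u|^{q+\beta-2}|\nabla\eta|^2$, and Young's inequality with exponents $\tfrac{p+\beta}{q+\beta-2}$ and $\tfrac{p+\beta}{2+p-q}$ (valid exactly when $q<p+2$) reabsorbs this into the left side. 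An interpolation/hole-filling then gives $\int_{B_r}|\nabla u|^{p+\beta}\le C(\beta,\|U\|_{L^\infty})$ for \emph{every} $\beta$, with no restriction on $N$. Only after this do you run the Moser iteration, now with a high enough starting integrability that the dimension-dependent gain suffices. Your ``fractional Sobolev embedding / multiplicative trick'' does not supply this mechanism; the integration by parts against $u$ is the concrete step you need.
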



\subsection{Comparision to Previous Results}
Regularity theory of variational problems with nonstandard $(p,q)$ growth was pioneered by P.Marcellini in a series of seminal papers \cite{marcelliniRegularityMinimizersIntegrals1989,marcelliniRegularityExistenceSolutions1991,marcelliniRegularityEllipticEquations1993,marcelliniRegularityScalarVariational1996,marcelliniEverywhereRegularityClass1996}. Since we are interested in Lipschitz regularity, let us recall that P.Marcellini proves Lipschitz bounds for $U\in W^{1,p}_{\loc}(\Om)$ under the restriction
\begin{equation*}
	\frac{q}{p}<1+\frac{2}{N}.
\end{equation*}
In a recent paper, P.Bella and M.Sch\"affner \cite{bellaRegularityMinimizersScalar2020} improved the restriction to
\begin{align}\label{bellaschaffner}
	\frac{q}{p}<1+\min\left\{1,\frac{2}{N-1}\right\}, \quad \mbox{ for } N\geq 2 \quad \text{and} \quad p \geq 2,
\end{align} by employing a specialized test function that enables them to use Sobolev embedding on the sphere. 
There is a large body of work dealing with problems of $(p,q)$-growth as well as other nonstandard growth problems, for which we refer to \cite{espositoHigherIntegrabilityMinimizers1999,espositoRegularityMinimizersFunctionals1999,espositoRegularityResultsMinimizers2002,espositoSharpRegularityFunctionals2004,colomboRegularityDoublePhase2015,colomboBoundedMinimisersDouble2015,cupiniLocalBoundednessMinimizers2015,cupiniRegularityMinimizersLimit2017,baroniRegularityGeneralFunctionals2018,defilippisRegularityMultiphaseVariational2019,beckLipschitzBoundsNonuniform2020,eleuteriRegularityScalarIntegrals2020,defilippisLipschitzBoundsNonautonomous2020,defilippisRegularityMinimaNonautonomous2020,defilippisInterpolativeGapBounds2021}. A  more detailed survey on the state of the art for problems with nonstandard growth may be found in \cite{marcelliniRegularityGeneralGrowth2020,mingioneRecentDevelopmentsProblems2021}.

It is well known that Lipschitz continuity and even boundedness for \cref{functional} fail when $p$ and $q$ are far apart as evidenced by the following example of Hong \cite{hongRemarksMinimizersVariational1992}, which is a variation on the famous counterexample of Giaquinta \cite{giaquintaGrowthConditionsRegularity1987}:
\begin{align*}
	\int_{\Om} |\nabla u|^2+|u_{x_n}|^4\,dx,
\end{align*} which satisfies \cref{hyp1,hyp2,hyp3} for $p=2$ and $q=4$ and admits an unbounded minimizer if $N\geq 6$ (more examples of unbounded minimizers of \cref{functional} may be found in \cite{marcelliniExempleSolutionDiscontinue1987}). It was shown in \cite[Section~6]{marcelliniRegularityExistenceSolutions1991} that if $q>\frac{(N-1)p}{N-1-p}$, then one cannot expect boundedness and only recently, this restriction was found to be sharp in \cite{hirschGrowthConditionsRegularity2020}, where it is proved that the minimizer is bounded provided
\begin{align}\label{hirschschaffner}
	\frac{1}{p}-\frac{1}{q}\leq \frac{1}{N-1}.
\end{align}

It is easy to see that there is a  gap between the restrictions in \cref{hirschschaffner} and \cref{bellaschaffner} and in this context, the authors in \cite{bellaRegularityMinimizersScalar2020,mingioneRecentDevelopmentsProblems2021} asked if one could obtain a Sobolev-type restriction (as in \cref{hirschschaffner}) in order for the minimizer to be  Lipschitz regular. \emph{In this regard, we improve the restriction in \cref{bellaschaffner} in some special ranges of $p,q$ and $N$ and also partially provide an answer to the question from \cite{bellaRegularityMinimizersScalar2020,mingioneRecentDevelopmentsProblems2021} by obtaining a Sobolev type restriction when $N > \frac{p(2+p)}{2} + 1$.}
\begin{enumerate}[(i)]
    \item For bounded minimizers, we require $q < p+2$, see \cref{maintheorem}.
    \item Combining the restriction $q < p+2$ with the optimal restriction for boundedness from \cref{hirschschaffner}, we see that Lipschitz regularity for minimizers holds provided  $\frac{q}{p} < \min\left\{1+\frac{p}{N-1-p},1+\frac{2}{p} \right\}$.
    \item In the case $p\leq N-1$, we see that $\frac{2}{p} \geq \frac{2}{N-1}$, which suggests that \cref{maintheorem} improves the restriction given in \cref{bellaschaffner}. But it must be noted that our result additionally requires that the solutions are bounded which also requires the restriction \cref{hirschschaffner} to be satisfied. We now compare the two results in a few special cases as follows:
    
     \begin{table}[!htbp] \centering
 \caption{ Admissible values of $q$, shaded regions denote sharp restrictions. }
 \label{}
 \begin{tabular}{|c||c|c|c|c|c|c|c|} 
 \hline 
  & $N=2$& $N=3$& $N=4$& $N=5$ & $N=6$ & $N=7$ &  \\
 \hline \hline\\[-1.8ex]
 \multirow{ 3 }{*}{ $p=2$ }  &  $q<4$ &  $q<4$  & $q<4$   & {$q<4$} & \colorbox{babyblue!20}{$q\leq \frac{10}3$} & \colorbox{babyblue!20}{$q\leq 3 $} & \text{\cref{maintheorem} + \cref{hirschschaffner}} - $C^{0,1}$\\ \cline{2-8} \\[-1.8ex]
   &  $q<4$ &  $q<4$  & $q<\frac{10}3$   &  $q<3$ & $q< \frac{14}5$ & $q<\frac83 $ & \cref{bellaschaffner} - $C^{0,1}$\\ \cline{2-8} \\ [-1.8ex]
   &  \colorbox{babyblue!20}{$q \in (1,\infty]$} &  \colorbox{babyblue!20}{$q\in (1,\infty)$}  & \colorbox{babyblue!20}{$q\leq 6$}   &  \colorbox{babyblue!20}{$q\leq 4$} & \colorbox{babyblue!20}{$q\leq\frac{10}{3}$} & \colorbox{babyblue!20}{$q\leq 3$} & \cref{hirschschaffner} - $L^{\infty}$\\
   \hline \hline\\
 \multirow{ 3 }{*}{ $p=3$ }  &  $q<5$ &  $q<5$  & $q<5$   &  {$q<5$} & {$q<5$} & {$q<5 $} & \text{\cref{maintheorem} + \cref{hirschschaffner}} - $C^{0,1}$\\ \cline{2-8} \\[-1.8ex]
   &  $q<6$ &  $q<6$  & $q<5$   &  $q<\frac{9}{2}$ & $q< \frac{21}5$ & $q<4 $ & \cref{bellaschaffner} - $C^{0,1}$\\ \cline{2-8} \\ [-1.8ex]
   &  \colorbox{babyblue!20}{$q \in (1,\infty]$} &  \colorbox{babyblue!20}{$q\in (1,\infty]$}  & \colorbox{babyblue!20}{$q\in (1,\infty)$}   &  \colorbox{babyblue!20}{$q\leq 12$} & \colorbox{babyblue!20}{$q\leq\frac{15}{2}$} & \colorbox{babyblue!20}{$q\leq 6$} & \cref{hirschschaffner} - $L^{\infty}$\\
   \hline \hline\\
 \multirow{ 3 }{*}{ $p=4$ }  &  $q<6$ &  $q<6$  & $q<6$   &  {$q<6$} & {$q<6$} & {$q<6$} & \text{\cref{maintheorem} + \cref{hirschschaffner}} - $C^{0,1}$\\ \cline{2-8} \\[-1.8ex]
   &  $q<8$ &  $q<8$  & $q<\frac{20}3$   &  $q<6$ & $q< \frac{28}5$ & $q<\frac{16}3 $ & \cref{bellaschaffner} - $C^{0,1}$\\ \cline{2-8} \\ [-1.8ex]
   &  \colorbox{babyblue!20}{$q \in (1,\infty]$} &  \colorbox{babyblue!20}{$q\in (1,\infty]$}  & \colorbox{babyblue!20}{$q \in (1,\infty]$}   &  \colorbox{babyblue!20}{$q \in (1,\infty)$} & \colorbox{babyblue!20}{$q\leq 20$} & \colorbox{babyblue!20}{$q\leq 12$} & \cref{hirschschaffner} - $L^{\infty}$\\
   \hline
 \end{tabular}
 \end{table}

    \item Since we require bounded solutions, we see that for minimizers, Lipschitz regularity would then require $\frac{q}{p} < 1+\min\left\{\frac{p}{N-1-p},\frac{2}{p} \right\}$. In particular, if $N >  \frac{p(p+2)}{2}+1$, then $\frac{2}{p} > \frac{p}{N-1-p}$  and thus Lipschitz regularity holds for any minimizer as they are automatically bounded. In particular, due to the sharpness of the condition \cref{hirschschaffner}, we automatically obtain sharpness of the Lipschitz regularity in this range.
    \item Our theorem improves the previous restriction for bounded minimizers of \cref{functional} which was found to be $q<p+1$ in \cite{choeInteriorBehaviourMinimizers1992,cupiniExistenceRegularityElliptic2014}. 
\end{enumerate}

We now briefly describe the method of proof, first, we begin with a regularization procedure following \cite{bousquetLipschitzRegularityOrthotropic2020} where a quadratic term is added to $f$. The regularized solution is shown to be in $C^{1,\gamma}\cap W^{2,2}$, and we exploit this $W^{2,2}$ regularity to obtain a gradient higher integrability result.  After obtaining a Caccioppoli-type inequality for the gradient of $U$, we prove that $\nabla U\in L^s_{\loc}$ for all $s\in(1,\infty)$. In fact, $\|\nabla U\|_s$ is estimated in terms of $\|U\|_{L^{\infty}}$ provided $q < p+2$ holds. Finally, we use a Moser iteration adapted for solutions of equations with unbalanced growth to obtain the required result.

\begin{remark}
	After this paper was written, we became aware that this result has been proved previously by Bildhauer and Fuchs~\cite{BF2002} in 2002. The methods are, for the most part, similar. Instead of De Giorgi iteration, we use a Moser iteration in the last step. Also, we use a quadratic term for the regularization and we use the notion of bounded slope condition to gain Lipschitz regularity for the regularized minimizer. 
\end{remark}

\section{Notations and Preliminaries}\label{section1}
\subsection{Notations}
We begin by collecting the standard notation that will be used throughout the paper.
\begin{itemize}
	\item We shall denote $N$ to be the space dimension. A point in $\mathbb{R}^{N}$ will be denoted by $x$. 
	\item Let $\Omega$ be a domain in $\mathbb{R}^N$ of boundary $\partial \Omega$.
	\item The notation $a \lesssim b$ is shorthand for $a\leq C b$ where $C$ is a constant independent of the regularization parameters $\sigma$ and $\ve$. 
	\item We will use the symbol $\iprod{\cdot}{\cdot}$ to denote the Euclidean inner product.
\end{itemize}
\subsection{Preliminaries for Regularization}
We list some of the preliminaries that are required in the subsequent sections. The regularization procedure relies on the addition of a quadratic term to the functional. Stampacchia \cite{stampacchiaRegularMultipleIntegral1963} has proved some general theorems on the local Lipschitz regularity of minimizers to convex minimization problems posed on convex domains with boundary values satisfying the {\itshape{bounded slope condition}}.  

\begin{definition}[Uniformly Convex set]
	A bounded, open set $B\subset\mathbb{R}^N$ is said to be uniformly convex if there exists $\nu>0$ such that for every boundary point $x_0\in\partial B$ there exists a hyperplane $H_{x_0}$ passing through that point satisfying
	\begin{align*}
		\dist(y,H_{x_0})\geq \nu |y-x_0|^2 \quad \mbox{ for every $y\in\partial B$.}
	\end{align*}
\end{definition}

\begin{definition}[Bounded Slope Condition]
	Let $K$ be a positive real number and $B$ an open bounded convex subset of $\mathbb{R}^N$. We say that a function $\phi:\partial B\to\mathbb{R}$ satisfies the bounded slope condition of rank $K$ if for any $x_0\in\partial B$ there exists vectors $l_{x_0}^-$ and $l_{x_0}^+$ satisfying $||l_{x_0}^-||\leq K$ and   $||l_{x_0}^+||\leq K$ such that
	\begin{align*}
		\iprod{l_{x_0}^-}{x-x_0}\leq \phi(x)-\phi(x_0)\leq \iprod{l_{x_0}^+}{x-x_0} \quad \mbox{ holds for every $x\in\partial B$.}
	\end{align*}
\end{definition}

The following proposition gives a necessary and sufficient condition for a function to satisfy the bounded slope condition, see \cite[Corollary 4.3]{hartman1966bounded} and \cite{mirandaTeoremaDiEsistenza1965} for the details of the proof.
\begin{proposition}\label{characterbsc}
	Let $B\subset\mathbb{R}^N$ be a bounded, open and  uniformly convex domain with the boundary being $\partial B \in C^{1,1}$ regular. Then, a necessary and sufficient condition for any function $\phi(x)$, $x \in \partial B$  to satisfy bounded slope condition is that $\phi(x) \in C^{1,1}(\partial B)$.
\end{proposition}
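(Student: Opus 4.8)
\emph{Proof strategy.} The statement is an equivalence, and both implications rest on a single geometric fact: by uniform convexity the surface $\partial B$ bends away from each tangent hyperplane at least quadratically, while by the $C^{1,1}$ hypothesis it bends away at most quadratically. Writing $n(x_0)$ for the outer unit normal at $x_0\in\partial B$ and $H_{x_0}$ for the tangent hyperplane, this means
\[
\nu\,\abs{x-x_0}^2 \;\le\; -\iprod{n(x_0)}{x-x_0} \;=\; \dist(x,H_{x_0}) \;\le\; C_1\,\abs{x-x_0}^2 \qquad\text{for all } x\in\partial B,
\]
the first inequality being the uniform convexity of $B$ and the last following by writing $\partial B$ near $x_0$ as a $C^{1,1}$ graph over $H_{x_0}$. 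This double estimate is the only place the hypotheses on $B$ enter.

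For the implication ``$\phi\in C^{1,1}(\partial B)\Rightarrow$ bounded slope condition'', the plan is first to record that $C^{1,1}$ regularity furnishes a Lipschitz (hence bounded, say by $L$) tangential gradient field $V$ on $\partial B$ together with the second–order estimate $\abs{\phi(x)-\phi(x_0)-\iprod{V(x_0)}{x-x_0}}\le C_0\abs{x-x_0}^2$ for all $x,x_0\in\partial B$ (near the diagonal this is the defining property of $C^{1,1}$ read in a graph chart; away from the diagonal, Lipschitzness of $\phi$ and the positive lower bound on $\abs{x-x_0}$ give the same bound with a larger constant). One then absorbs the quadratic remainder into a linear term: by the display above, $C_0\abs{x-x_0}^2\le (C_0/\nu)\bigl(-\iprod{n(x_0)}{x-x_0}\bigr)$ on $\partial B$, so with $l_{x_0}^{\pm}:=V(x_0)\mp (C_0/\nu)\,n(x_0)$ one obtains $\iprod{l_{x_0}^-}{x-x_0}\le\phi(x)-\phi(x_0)\le\iprod{l_{x_0}^+}{x-x_0}$ for every $x\in\partial B$, with $\|l_{x_0}^\pm\|\le L+C_0/\nu=:K$. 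Hence $\phi$ satisfies the bounded slope condition of rank $K$.

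For the converse, assume $\phi$ satisfies the bounded slope condition of rank $K$; then $\phi$ is $K$–Lipschitz. I would introduce the lower convex envelope $\phi_\flat(x):=\sup_{y\in\partial B}\bigl(\phi(y)+\iprod{l_y^-}{x-y}\bigr)$ and the upper concave envelope $\phi_\sharp(x):=\inf_{y\in\partial B}\bigl(\phi(y)+\iprod{l_y^+}{x-y}\bigr)$ on $\overline B$. Both are $K$–Lipschitz, $\phi_\flat$ is convex and $\phi_\sharp$ concave, both agree with $\phi$ on $\partial B$ (for one inequality take $y=x$, for the other use the bounded slope condition), and $\phi_\flat\le\phi_\sharp$ on $\overline B$ since $\overline B$ is the convex hull of $\partial B$. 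Fixing $x_0\in\partial B$, the two envelopes touch there; choosing a subgradient $p^-$ of $\phi_\flat$ and a supergradient $p^+$ of $\phi_\sharp$ at $x_0$, the inequality $\iprod{p^--p^+}{x-x_0}\le 0$ for all $x\in\overline B$ places $p^--p^+$ in the normal cone of $B$ at $x_0$, which is the single ray $\RR_{\ge0}\,n(x_0)$ because $\partial B\in C^1$ has a unique supporting hyperplane at each point. Thus $p^-$ and $p^+$ have a common tangential component $\tau(x_0)$, with $\abs{\tau(x_0)}\le K$.

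Finally, restricting the envelope inequalities $\phi_\flat(x_0)+\iprod{p^-}{x-x_0}\le\phi(x)=\phi_\flat(x)\le\phi_\sharp(x)\le\phi_\sharp(x_0)+\iprod{p^+}{x-x_0}$ to $x\in\partial B$ and using $\abs{\iprod{n(x_0)}{x-x_0}}\le C_1\abs{x-x_0}^2$ to discard the normal parts of $p^\pm$, one gets the uniform estimate $\abs{\phi(x)-\phi(x_0)-\iprod{\tau(x_0)}{x-x_0}}\le C_2\abs{x-x_0}^2$ for all $x,x_0\in\partial B$. Passing to a $C^{1,1}$ graph chart $\Psi$, the function $h:=\phi\circ\Psi$ inherits $\abs{h(y')-h(z')-\iprod{p(z')}{y'-z'}}\le C_3\abs{y'-z'}^2$ (with $p$ the pull–back of $\tau$), which forces $h+C_3\abs{\cdot}^2$ to be convex and $h-C_3\abs{\cdot}^2$ to be concave; a function that is simultaneously semiconvex and semiconcave lies in $C^{1,1}_{\loc}$, its distributional Hessian being a symmetric matrix measure pinned between $-2C_3 I$ and $2C_3 I$ and therefore in $L^\infty$. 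Covering $\partial B$ by such charts then gives $\phi\in C^{1,1}(\partial B)$. I expect the only genuinely delicate point to be this last step — turning the pointwise second–order expansion of $\phi$ into honest Lipschitz regularity of the tangential gradient $\tau$ — because a bare triangle–inequality argument controls only the component of $\tau(x)-\tau(x_0)$ along $x-x_0$; it is the semiconvex/semiconcave structure (equivalently, a careful computation in a fixed chart) that closes the gap.
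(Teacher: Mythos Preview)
The paper does not itself prove this proposition; it merely states it and refers the reader to \cite[Corollary~4.3]{hartman1966bounded} and \cite{mirandaTeoremaDiEsistenza1965} for the details. Your argument is correct and is in the spirit of those classical references: the forward direction, in which the quadratic $C^{1,1}$ remainder is absorbed into a linear correction along the outer normal via the uniform convexity inequality $\nu\abs{x-x_0}^2\le -\iprod{n(x_0)}{x-x_0}$, is exactly Miranda's proof, and your route for the converse through the convex/concave envelopes and the semiconvex--semiconcave characterisation of $C^{1,1}$ is one of the standard arguments.

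One small technical point deserves to be made explicit. When you pick a subgradient $p^-$ of $\phi_\flat$ and a supergradient $p^+$ of $\phi_\sharp$ at the \emph{boundary} point $x_0\in\partial B$, the sub-/superdifferential there is unbounded in the outward normal direction (if $p$ is a subgradient then so is $p+tn(x_0)$ for every $t\ge 0$, since $\iprod{n(x_0)}{x-x_0}\le 0$ on $\overline B$). To make the step ``use $\abs{\iprod{n(x_0)}{x-x_0}}\le C_1\abs{x-x_0}^2$ to discard the normal parts of $p^\pm$'' legitimate you need the normal components of $p^\pm$ to be bounded uniformly in $x_0$. This is easy to arrange: choose $p^\pm$ as limits of (sub/super)gradients at interior points $x_n\to x_0$, which have norm at most $K$ because the envelopes are $K$--Lipschitz; the limit then satisfies $\abs{p^\pm}\le K$ and the rest of your argument goes through unchanged.
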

%
\begin{theorem}[\cite{stampacchiaRegularMultipleIntegral1963}, Theorem 9.2]\label{lipstampacchia}
	Let $H\in C^2(\mathbb{R}^N)$ and assume it satisfies $\iprod{D^2H(p)\zeta}{\zeta} \geq \nu |\zeta|^2$ for all $\zeta \in \RR^N$. If $\Om\subset\mathbb{R}^N$ is uniformly convex and $C^{1,1}$, then the integral given by $I(u)=\int_{\Om} H(\nabla u)\,dx$ attains its minimum in the class of all Lipschitz functions in $\Om$ assuming that the boundary values satisfy the bounded slope condition and are the trace of a $W^{2,p}$ function for some $p>n$.
\end{theorem}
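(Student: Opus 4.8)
The final statement is the classical Hilbert--Haar existence theorem in the form due to Hartman and Stampacchia, and I would prove it by the usual two-step strategy: solve a \emph{constrained} minimization problem, in which an artificial Lipschitz bound is imposed, by the direct method, and then use comparison with affine barriers to show the constraint is never active. Write $\phi$ for the boundary datum; it satisfies the bounded slope condition of some rank $K$, and since it is the trace of a $W^{2,p}$ function with $p>N$ it is in particular the trace of a Lipschitz function on $\overline\Om$, so the class of Lipschitz competitors is nonempty. Let $\aa_K$ be the set of $v\in\mathrm{Lip}(\overline\Om)$ with $v=\phi$ on $\partial\Om$ and Lipschitz constant at most $K$. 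It is nonempty: the lower envelope $\underline u(x):=\sup_{x_0\in\partial\Om}(\phi(x_0)+\iprod{l_{x_0}^-}{x-x_0})$ is a supremum of $K$-Lipschitz affine functions, hence $K$-Lipschitz, and equals $\phi$ on $\partial\Om$ by the bounded slope inequalities. On $\aa_K$ the gradients are bounded by $K$ and $H$ is continuous, so $I$ is bounded below; a minimizing sequence is equibounded and equi-Lipschitz, so by the Arzel\`a--Ascoli theorem together with weak-$*$ compactness of the gradients in $L^\infty$ it has a subsequence converging to some $u\in\aa_K$, and convexity of $H$ gives weak-$*$ lower semicontinuity of $I$. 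Thus $I$ attains its minimum over $\aa_K$ at some $u$; strict convexity of $H$ (implied by $D^2H\ge\nu\,\mathrm{Id}$) makes $u$ the unique such minimizer, and likewise makes minimizers unique for any prescribed boundary values within the $K$-Lipschitz class.

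The core estimate is the boundary Lipschitz bound, obtained by comparison with affine functions. Any affine $\ell$ minimizes $I$ against its own boundary values on any subdomain $D$: if $v=\ell$ on $\partial D$ then $\frac{1}{|D|}\int_D\nabla v=\frac{1}{|D|}\int_D\nabla\ell=\nabla\ell$ by the divergence theorem, so Jensen's inequality gives $\int_D H(\nabla v)\ge|D|\,H(\nabla\ell)=\int_D H(\nabla\ell)$. Fix $x_0\in\partial\Om$ and set $\ell^+(x):=\phi(x_0)+\iprod{l_{x_0}^+}{x-x_0}$, which is $K$-Lipschitz and satisfies $\ell^+\ge\phi=u$ on $\partial\Om$. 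For $\delta>0$ the superlevel set $V_\delta:=\{u>\ell^++\delta\}$ is compactly contained in $\Om$, and $\tilde u:=\min(u,\ell^++\delta)\in\aa_K$ satisfies $I(\tilde u)-I(u)=\int_{V_\delta}(H(\nabla\ell^+)-H(\nabla u))\le0$, because $\ell^++\delta$ minimizes $I$ on $V_\delta$ against the boundary values $u|_{\partial V_\delta}$. By uniqueness $\tilde u=u$, hence $V_\delta=\emptyset$; sending $\delta\downarrow0$ gives $u\le\ell^+$ in $\Om$, and symmetrically $u\ge\ell^-$ with $\ell^-(x):=\phi(x_0)+\iprod{l_{x_0}^-}{x-x_0}$. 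Since $\ell^\pm(x_0)=\phi(x_0)=u(x_0)$, we conclude $|u(x)-u(x_0)|\le K|x-x_0|$ for all $x\in\overline\Om$ and $x_0\in\partial\Om$.

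Interior regularity follows by translations, and this is where convexity of $\Om$ is used. For $\tau\in\RR^N$ put $\Om_\tau:=\Om\cap(\Om-\tau)$ and $v_\tau(x):=u(x+\tau)$ on $\Om_\tau$. Since $I$ depends only on the gradient, both $u|_{\Om_\tau}$ and $v_\tau$ minimize $I$ on $\Om_\tau$ against their own boundary values within the $K$-Lipschitz class; for $u|_{\Om_\tau}$ this uses a cut-and-paste argument together with the gluing lemma for Lipschitz functions, valid because the segment joining two points of $\overline\Om$ stays in $\overline\Om$. Every $x\in\partial\Om_\tau$ satisfies $x\in\partial\Om$ or $x+\tau\in\partial\Om$, and in either case the boundary Lipschitz bound gives $|u(x)-v_\tau(x)|=|u(x)-u(x+\tau)|\le K|\tau|$. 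A comparison principle for minimizers --- if $w_1,w_2$ minimize $I$ against their own boundary values on a domain (within the $K$-Lipschitz class) and $w_1\le w_2$ on the boundary, then $w_1\le w_2$ inside, proved by noting that $\min(w_1,w_2)$ and $\max(w_1,w_2)$ are admissible of total energy $I(w_1)+I(w_2)$, so uniqueness forces $w_1=\min(w_1,w_2)$ --- applied to $u$ and $v_\tau\pm K|\tau|$ propagates the estimate to all of $\Om_\tau$. Hence $|u(x)-u(x+\tau)|\le K|\tau|$ whenever $x,x+\tau\in\Om$, so $u$ is $K$-Lipschitz in $\Om$.

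It remains to remove the constraint. Let $w$ be an arbitrary Lipschitz competitor with $w=\phi$ on $\partial\Om$ and pick $M\ge\max\{K,\mathrm{Lip}(w)\}$. The bounded slope condition of rank $K$ is a fortiori one of rank $M$, so the three steps above apply verbatim with $\aa_M$ in place of $\aa_K$ and produce the unique minimizer $\tilde u$ of $I$ over $\aa_M$; but the barrier and translation arguments used only the rank-$K$ affine functions $\ell^\pm_{x_0}$, so $\tilde u$ is in fact $K$-Lipschitz, hence lies in $\aa_K$, hence equals $u$ by uniqueness over $\aa_K$. Therefore $I(u)=I(\tilde u)\le I(w)$, which shows that $u$ minimizes $I$ among all Lipschitz functions with boundary values $\phi$. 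I expect the principal difficulty to be precisely this last point --- recognizing that enlarging the Lipschitz-bound constraint cannot enlarge the minimizing set --- together with the bookkeeping in the translation step (that $\Om_\tau$ inherits convexity so the gluing lemma applies, and that $\partial\Om_\tau$ decomposes as claimed). Note that the gradient bound is entirely a priori and uniform, coming only from the bounded slope condition, which is why no growth hypothesis from above on $H$ is needed.
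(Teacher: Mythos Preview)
The paper does not prove this theorem; it is quoted as a preliminary result from Stampacchia and invoked as a black box in the regularization lemma. Your argument is the classical Hilbert--Haar/Hartman--Stampacchia proof and is correct.

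Two minor remarks on the hypotheses. First, you use only ordinary convexity of $\Om$ (for the gluing in the translation step), not uniform convexity or $C^{1,1}$ regularity of $\partial\Om$; those extra hypotheses appear in the stated theorem because in Stampacchia's original framework they are what guarantee, together with $C^{1,1}$ regularity of the boundary datum, that the bounded slope condition holds --- but since the statement here assumes the bounded slope condition directly, they play no further role in the existence argument you give. Second, the $W^{2,p}$ ($p>N$) assumption on the boundary datum is likewise redundant once the bounded slope condition is assumed, since your lower envelope $\underline u$ already furnishes a $K$-Lipschitz competitor with the correct boundary values.
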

Let us now recall some basic facts from calculus of variations that will be needed later on. The first concerns the existence of a minimizer, the proof of which can be found in \cite[Theorem 2.7]{rindler}.
\begin{theorem}\label{existence}
Let $f:\mathbb{R}^N\to [0,\infty)$ be a $C^2$ function such that
\begin{itemize}
	\item[(i)] $f$ satisfies the $r$-coercivity bound, i.e.,  $f(z)\geq m |z|^r$ holds for all $z\in\mathbb{R}^N$ and some $r\in(1,\infty)$.
	\item[(ii)] $f$ is a convex function.
\end{itemize}
Then, the associated functional $\mathcal{F}(u)=\int_\Omega f(\nabla u)\,dx$ has a minimizer over $W^{1,r}_g(\Omega)=\{u\in W^{1,r}(\Omega):u|_{\partial\Omega}=g\}$, where $g\in W^{1-1/r,r}(\partial \Omega)$.
\end{theorem}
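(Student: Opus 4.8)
The plan is to apply the direct method of the calculus of variations. First I would fix $g\in W^{1-1/r,r}(\partial\Omega)$ and note that the admissible class $W^{1,r}_g(\Omega)$ is nonempty, since $g$ admits an extension in $W^{1,r}(\Omega)$ by the trace theorem; in particular, $\inf_{W^{1,r}_g(\Omega)}\mathcal{F}$ is finite (it is $\geq 0$ by $f\geq 0$ and $<\infty$ on the extension). I would then take a minimizing sequence $(u_k)\subset W^{1,r}_g(\Omega)$, i.e.\ $\mathcal{F}(u_k)\to \inf\mathcal{F}$, so that $\sup_k\mathcal{F}(u_k)<\infty$.

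The next step is to extract a weakly convergent subsequence. From the coercivity bound~(i), $\mathcal{F}(u_k)\geq m\int_\Omega |\nabla u_k|^r\,dx$, so $(\nabla u_k)$ is bounded in $L^r(\Omega;\RR^N)$. Writing $u_k = w + v_k$ where $w$ is a fixed extension of $g$ and $v_k\in W^{1,r}_0(\Omega)$, the Poincar\'e inequality on $W^{1,r}_0(\Omega)$ gives $\|v_k\|_{W^{1,r}}\lesssim \|\nabla v_k\|_{L^r}\leq \|\nabla u_k\|_{L^r}+\|\nabla w\|_{L^r}$, which is bounded; hence $(u_k)$ is bounded in $W^{1,r}(\Omega)$. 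Since $r>1$, $W^{1,r}(\Omega)$ is reflexive, so along a subsequence (not relabelled) $u_k\rightharpoonup U$ weakly in $W^{1,r}(\Omega)$ for some $U\in W^{1,r}(\Omega)$. Because $W^{1,r}_g(\Omega) = w + W^{1,r}_0(\Omega)$ is a closed affine subspace and hence weakly closed, $U\in W^{1,r}_g(\Omega)$, i.e.\ $U|_{\partial\Omega}=g$.

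Finally I would pass to the limit using weak lower semicontinuity. Since $f$ is convex and continuous with $f\geq 0$, the functional $v\mapsto\int_\Omega f(\nabla v)\,dx$ is sequentially weakly lower semicontinuous on $W^{1,r}(\Omega)$ (this is the standard Tonelli--Serrin result on lower semicontinuity of convex integrands of the gradient, which is exactly what~\cite[Theorem 2.7]{rindler} is built to provide). Therefore
\begin{equation*}
\mathcal{F}(U)=\int_\Omega f(\nabla U)\,dx \leq \liminf_{k\to\infty}\int_\Omega f(\nabla u_k)\,dx = \liminf_{k\to\infty}\mathcal{F}(u_k)=\inf_{W^{1,r}_g(\Omega)}\mathcal{F}.
\end{equation*}
Combined with $U\in W^{1,r}_g(\Omega)$, this forces equality, so $U$ is a minimizer of $\mathcal{F}$ over $W^{1,r}_g(\Omega)$, as claimed. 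The main obstacle is the weak lower semicontinuity step: one must be careful that $f$ having only the lower bound $f(z)\geq m|z|^r$ (and no upper bound in this statement) still allows the standard argument --- but since $f\geq 0$ and $f$ is convex, one can use the fact that convexity plus continuity already yields weak lower semicontinuity via a Mazur-lemma / supporting-hyperplane argument, so no growth control from above is needed here. Everything else --- nonemptiness of the admissible class, boundedness of the minimizing sequence, and weak closedness of $W^{1,r}_g(\Omega)$ --- is routine.
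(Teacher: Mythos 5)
Your argument is correct and is essentially the proof the paper delegates: the paper does not prove \cref{existence} itself but cites Rindler's Theorem 2.7, and your direct-method scheme (coercivity plus Poincar\'e on $W^{1,r}_0$ to bound a minimizing sequence, reflexivity of $W^{1,r}$ and weak closedness of the affine class $w+W^{1,r}_0(\Omega)$, then weak lower semicontinuity of the convex integrand via strong lower semicontinuity/Fatou combined with convexity) is exactly the standard proof of that cited result. The one assertion you should not make as written is that $\mathcal{F}$ is finite on an extension of $g$: the hypotheses give only a lower bound $f(z)\geq m|z|^r$ and no upper growth control (e.g.\ $f(z)=e^{|z|^2}$ is admissible), so finiteness of $\inf_{W^{1,r}_g(\Omega)}\mathcal{F}$ is not guaranteed; either add the remark that if the infimum is $+\infty$ then every admissible function is trivially a minimizer and the direct method is only needed in the finite case, or note that in the paper's actual application $f_{\sigma}$ has $q$-growth from above and the boundary datum $U_{\ve}$ is smooth, so the infimum is finite there.
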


The second result discusses when does there exist a unique solution, the proof of which can be found in \cite[Proposition 2.10]{rindler}.
\begin{theorem}\label{uniqueness}
	Let $\mathcal{F}:W^{1,r}(\Omega)\to\mathbb{R},\, r\in [1,\infty)$, be an integral functional with a $C^2$ integrand $f:\mathbb{R}^N\to\mathbb{R}$. If f is strictly convex, i.e., 
	\[f(\theta z_1+(1-\theta)z_2)<\theta f(z_1)+(1-\theta)f(z_2),\] holds for all $z_1,z_2\in\mathbb{R}^N$ with $z_1\neq z_2$ and any $\theta\in (0,1)$, then the minimizer $u_*\in W^{1,r}_g(\Omega)=\{u\in W^{1,r}(\Omega):u|_{\partial\Omega}=g\}$ of $\mathcal{F}$, where $g\in W^{1-1/r,r}(\partial \Omega)$, if it exists, is unique.
\end{theorem}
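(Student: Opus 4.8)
\textbf{Proof plan for \cref{uniqueness}.}
The statement to be proven is the standard uniqueness-from-strict-convexity result: under the stated hypotheses on $\mathcal{F}$ and $f$, any minimizer $u_*$ of $\mathcal{F}$ over $W^{1,r}_g(\Omega)$ is unique. The plan is a direct convexity argument by contradiction. Suppose $u_1, u_2 \in W^{1,r}_g(\Omega)$ are both minimizers of $\mathcal{F}$ with $u_1 \neq u_2$, and set $m_0 := \mathcal{F}(u_1) = \mathcal{F}(u_2) = \inf_{W^{1,r}_g(\Omega)} \mathcal{F}$. Since $u_1|_{\partial\Omega} = u_2|_{\partial\Omega} = g$ in the trace sense, the midpoint $w := \tfrac{1}{2}(u_1 + u_2)$ also lies in $W^{1,r}_g(\Omega)$ (the space is affine over the linear subspace $W^{1,r}_0(\Omega)$, hence convex), so $w$ is an admissible competitor and $\mathcal{F}(w) \geq m_0$.

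Next I would show $\mathcal{F}(w) \leq m_0$ with the inequality strict unless $\nabla u_1 = \nabla u_2$ a.e. Since $u_1 \neq u_2$ but they agree on $\partial\Omega$, by Poincar\'e's inequality $\nabla u_1 \neq \nabla u_2$ on a set $E \subset \Omega$ of positive Lebesgue measure. For a.e.\ $x \in \Omega$, convexity of $f$ gives
\[
f\!\left(\tfrac{1}{2}\nabla u_1(x) + \tfrac{1}{2}\nabla u_2(x)\right) \leq \tfrac{1}{2} f(\nabla u_1(x)) + \tfrac{1}{2} f(\nabla u_2(x)),
\]
while for $x \in E$, where $\nabla u_1(x) \neq \nabla u_2(x)$, strict convexity (applied with $\theta = \tfrac12$) makes this inequality strict. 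Integrating over $\Omega$ and using $|E| > 0$ yields
\[
\mathcal{F}(w) = \int_\Omega f\!\left(\tfrac{1}{2}\nabla u_1 + \tfrac{1}{2}\nabla u_2\right) dx < \tfrac{1}{2}\int_\Omega f(\nabla u_1)\,dx + \tfrac{1}{2}\int_\Omega f(\nabla u_2)\,dx = \tfrac{1}{2} m_0 + \tfrac{1}{2} m_0 = m_0,
\]
contradicting $\mathcal{F}(w) \geq m_0$. Hence $\nabla u_1 = \nabla u_2$ a.e.\ in $\Omega$, so $\nabla(u_1 - u_2) = 0$; since $u_1 - u_2 \in W^{1,r}_0(\Omega)$ and $\Omega$ is connected (a domain), this forces $u_1 - u_2 = 0$, i.e.\ $u_1 = u_2$, the desired contradiction. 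Therefore the minimizer, if it exists, is unique.

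The only genuinely delicate points, none of them a serious obstacle, are: (a) justifying that the strict pointwise inequality on the positive-measure set $E$ survives integration to give a strict inequality for $\mathcal{F}(w)$ — this follows from the elementary fact that if $h_1 \leq h_2$ a.e.\ and $h_1 < h_2$ on a set of positive measure with both integrable, then $\int h_1 < \int h_2$ (here one should note $f \geq m|z|^r \geq 0$ and $f(\nabla u_i) \in L^1$ since $u_i$ are minimizers with finite energy, so all integrals are finite and the manipulation is legitimate); and (b) the measurability of $E = \{x : \nabla u_1(x) \neq \nabla u_2(x)\}$, which is clear since $\nabla u_1, \nabla u_2$ are measurable, and the positivity $|E| > 0$ which follows from $u_1 \neq u_2$ in $W^{1,r}_g(\Omega)$ together with the Poincar\'e inequality on the bounded domain $\Omega$ applied to $u_1 - u_2 \in W^{1,r}_0(\Omega)$. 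The argument uses only convexity, not the $C^2$ or growth structure beyond ensuring integrability, and does not require existence of a minimizer, consistent with the conditional phrasing ``if it exists''.
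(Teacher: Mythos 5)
Your argument is correct and is essentially the standard strict-convexity/midpoint-competitor proof that the paper relies on by citing \cite[Proposition 2.10]{rindler}: average two putative minimizers, use strict convexity pointwise on the set where the gradients differ, and conclude $\nabla u_1=\nabla u_2$ a.e., whence $u_1=u_2$ via the common boundary values. The only cosmetic remark is that the bound $f(z)\geq m|z|^r$ is not a hypothesis of \cref{uniqueness} (it belongs to \cref{existence}); integrability of $f(\nabla u_i)$ follows instead from the assumption that $\mathcal{F}$ is real-valued (or from the affine lower bound enjoyed by any convex $C^2$ function), which does not affect the proof.
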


The next theorem gives a criterion for the integrand to be strictly convex, the proof of which can be found in \cite[Theorem 1.5]{simon2011}.
\begin{theorem}\label{strictlyconvex}
	Let $\Omega$ be an open convex subset of $\mathbb{R}^N$ and let $f:\Om\to\mathbb{R}$ be $C^2$.	Suppose that for all $x\in \Om$ the Hessian
matrix $D^2f(x)$ is strictly positive-definite, then $f$ is strictly convex.
\end{theorem}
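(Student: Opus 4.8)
The plan is to reduce the $N$-dimensional assertion to the elementary one-variable fact that a $C^2$ function of one real variable with strictly positive second derivative is strictly convex. Fix distinct points $z_1, z_2 \in \Om$ and a number $\theta \in (0,1)$. Since $\Om$ is convex, the whole segment $\{(1-t)z_1 + t z_2 : t \in [0,1]\}$ is contained in $\Om$, so the function $g \colon [0,1] \to \RR$ given by $g(t) := f\big((1-t)z_1 + t z_2\big)$ is well defined, and by the chain rule it is of class $C^2$ with
\[
  g''(t) = \langle D^2 f\big((1-t)z_1 + t z_2\big)(z_2 - z_1),\, z_2 - z_1 \rangle .
\]
Since $D^2 f$ is strictly positive-definite at every point of $\Om$ and $z_2 - z_1 \neq 0$, this yields $g''(t) > 0$ for all $t \in [0,1]$.

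Next I would invoke (or quickly prove) the one-variable lemma: if $g \in C^2([0,1])$ satisfies $g'' > 0$ throughout $[0,1]$, then $g(\lambda) < (1-\lambda)g(0) + \lambda g(1)$ for every $\lambda \in (0,1)$. This is standard: $g'$ is strictly increasing, hence $g$ is strictly convex; alternatively one applies Taylor's formula with Lagrange remainder on the intervals $[0,\lambda]$ and $[\lambda,1]$ and adds the two strict inequalities. Applying this with $\lambda = 1-\theta$ gives
\[
  f\big(\theta z_1 + (1-\theta) z_2\big) = g(1-\theta) < \theta\, g(0) + (1-\theta)\, g(1) = \theta f(z_1) + (1-\theta) f(z_2),
\]
which is precisely the strict convexity inequality appearing in the statement of \cref{uniqueness}. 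Since $z_1$, $z_2$ and $\theta$ were arbitrary, $f$ is strictly convex.

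I do not expect a genuine obstacle here: the entire content is the passage from the Hessian condition to a condition along segments, together with the elementary one-variable lemma. The only points deserving care are that the reduction genuinely uses convexity of $\Om$ (so that the connecting segment remains inside the domain where $f$ and its derivatives are defined and controlled) and that the one-variable lemma is applied in its strict form, which is legitimate here because $g'' > 0$ everywhere on $[0,1]$ by continuity of $D^2f$ and its pointwise strict positivity.
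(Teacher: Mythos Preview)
Your argument is correct: the reduction to a one-variable function along the segment, followed by the elementary strict-convexity lemma for $g'' > 0$, is exactly the standard proof. Note that the paper does not give its own proof of this statement at all; it merely cites \cite[Theorem~1.5]{simon2011}, so there is no in-paper proof to compare against, and your write-up is perfectly adequate as a replacement.
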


We end this subsection by recalling a maximum principle, whose proof may be found in \cite[Theorem 2.1]{stampacchiaRegularMultipleIntegral1963}.

\begin{theorem}[Maximum Principle]\label{maxprince}
	Let $a_{ij}(x), i,j=1,2,\ldots,N$ be measurable and bounded functions in $B$ such that 
	\begin{align*}
		a_{ij}\xi_i\xi_j\geq \mu|\xi|^2, \mbox{ a.e. } x\in B, \mbox{ for all }\xi\in\mathbb{R}^N.
	\end{align*} If $u\in H^1(B)$ satisfies
\begin{align*}
	\int_B a_{ij}(x)u_{x_i}(x)v_{x_j}(x)\,dx=0,\mbox{ for all }v\in H^1_0(B),
\end{align*} then we have
\begin{align*}
	u(x)\leq \max_{x\in\partial B} u(x)\mbox{ a.e. }x\in B.
\end{align*}
\end{theorem}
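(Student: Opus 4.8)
The plan is to run the classical Stampacchia test-function argument: test the weak equation against the positive part of $u$ truncated at its boundary maximum. Write $M := \max_{x \in \partial B} u(x)$, understood as the essential supremum of the boundary trace of $u$; if $M = +\infty$ there is nothing to prove, so assume $M < \infty$. I would then consider
\[
  v := (u-M)_+ = \max\{u - M,\,0\}.
\]
Since $u \in H^1(B)$ one has $v \in H^1(B)$ with $\nabla v = \chi_{\{u>M\}}\,\nabla u$ almost everywhere, by the chain rule for Sobolev functions. Because $u \le M$ on $\partial B$ in the sense of traces, the trace of $v$ vanishes on $\partial B$, so $v \in H^1_0(B)$ and is therefore an admissible test function in the weak formulation.

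Substituting $v$ into $\int_B a_{ij}u_{x_i}v_{x_j}\,dx = 0$ and using $v_{x_j} = \chi_{\{u>M\}}u_{x_j}$ yields
\[
  \int_{\{u>M\}} a_{ij}(x)\,u_{x_i}(x)\,u_{x_j}(x)\,dx = 0 .
\]
By the ellipticity hypothesis $a_{ij}\xi_i\xi_j \ge \mu|\xi|^2$, the integrand dominates $\mu\,|\nabla u|^2\chi_{\{u>M\}} = \mu\,|\nabla v|^2 \ge 0$, so $\mu\int_B |\nabla v|^2\,dx \le 0$, which forces $\nabla v = 0$ almost everywhere in $B$. Since $B$ is bounded and $v \in H^1_0(B)$, the Poincaré inequality gives $\|v\|_{L^2(B)} \le C\,\|\nabla v\|_{L^2(B)} = 0$, hence $v = 0$ a.e., i.e. $u \le M$ almost everywhere in $B$, which is exactly the claim.

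The one point that needs care is the assertion that $(u-M)_+ \in H^1_0(B)$; this is where the boundedness and the ($C^{1,1}$) regularity of $\partial B$ enter, via the characterization of $H^1_0(B)$ as the space of $H^1$-functions with vanishing trace. Equivalently, one may simply \emph{define} $\max_{\partial B} u := \inf\{k \in \RR : (u-k)_+ \in H^1_0(B)\}$; then for every $\ve>0$ the level $M+\ve$ is admissible, so $v_\ve := (u-M-\ve)_+ \in H^1_0(B)$, and the computation above applied to $v_\ve$ gives $\nabla v_\ve = 0$, hence $v_\ve = 0$ a.e., i.e. $u \le M+\ve$ a.e.; letting $\ve \downarrow 0$ recovers the conclusion while sidestepping trace theory. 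Everything else — the Sobolev chain rule, the ellipticity estimate, and the Poincaré inequality on $H^1_0(B)$ — is routine, so I expect this trace/admissibility issue to be the only genuine obstacle.
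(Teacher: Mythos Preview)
Your argument is correct and is precisely the classical Stampacchia truncation proof. Note that the paper does not give its own proof of this theorem: it merely cites \cite[Theorem~2.1]{stampacchiaRegularMultipleIntegral1963}, and the argument you have written is essentially the one found there, so there is nothing further to compare.
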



We shall make use of the following well-known iteration lemma whose proof may be found in \cite[Lemma 6.1]{giustiDirectMethodsCalculus2003}.

\begin{lemma}\label{iterlemma}
	Let $Z(t)$ be a bounded non-negative function in the interval $\rho,R$. Assume that for $\rho\leq t<s\leq R$ we have
	\begin{align*}
		Z(t)\leq [A(s-t)^{-\alpha}+B(s-t)^{-\beta}+C]+\vartheta Z(s)
	\end{align*} with $A,B,C\geq 0$, $\alpha,\beta>0$ and $0\leq \vartheta<1$. Then,
\begin{align*}
	Z(\rho)\leq c(\alpha,\vartheta)[A(R-\rho)^{-\alpha}+B(R-\rho)^{-\beta}+C].
\end{align*}
\end{lemma}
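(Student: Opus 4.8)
The plan is the classical iteration over a geometric sequence of radii. Fix a ratio $\tau\in(0,1)$, to be pinned down at the end as a function of $\alpha,\beta,\vartheta$, and set $t_0:=\rho$ and, recursively, $t_{i+1}:=t_i+(1-\tau)\tau^i(R-\rho)$, so that $t_i=\rho+(1-\tau^i)(R-\rho)$ is increasing with $t_i\to R$ and $t_{i+1}-t_i=(1-\tau)\tau^i(R-\rho)$. Applying the hypothesis with the admissible pair $(t,s)=(t_i,t_{i+1})$ and using $(t_{i+1}-t_i)^{-\gamma}=(1-\tau)^{-\gamma}(R-\rho)^{-\gamma}\tau^{-\gamma i}$ for $\gamma\in\{\alpha,\beta\}$ gives
\begin{align*}
Z(t_i)\;\leq\;\vartheta\,Z(t_{i+1})\;+\;\frac{A}{(1-\tau)^{\alpha}(R-\rho)^{\alpha}}\,\tau^{-\alpha i}\;+\;\frac{B}{(1-\tau)^{\beta}(R-\rho)^{\beta}}\,\tau^{-\beta i}\;+\;C .
\end{align*}

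Next I would iterate this estimate: multiplying the inequality for index $i$ by $\vartheta^i$ and summing telescopically over $0\leq i\leq k-1$ produces
\begin{align*}
Z(\rho)\;\leq\;\vartheta^{k}Z(t_k)\;+\;\frac{A}{(1-\tau)^{\alpha}(R-\rho)^{\alpha}}\sum_{i=0}^{k-1}\big(\vartheta\tau^{-\alpha}\big)^{i}\;+\;\frac{B}{(1-\tau)^{\beta}(R-\rho)^{\beta}}\sum_{i=0}^{k-1}\big(\vartheta\tau^{-\beta}\big)^{i}\;+\;C\sum_{i=0}^{k-1}\vartheta^{i}.
\end{align*}
At this point I choose $\tau$ close enough to $1$ that $\vartheta\tau^{-\alpha}<1$ and $\vartheta\tau^{-\beta}<1$; since $\vartheta<1$ this is always possible, for instance $\tau=\left(\frac{1+\vartheta}{2}\right)^{1/\max\{\alpha,\beta\}}$ works (when $\vartheta=0$ any $\tau\in(0,1)$ will do and the conclusion is in fact immediate after a single application). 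With such a $\tau$, all three geometric series converge; letting $k\to\infty$, the tail term satisfies $\vartheta^{k}Z(t_k)\leq\vartheta^{k}\sup_{[\rho,R]}Z\to 0$ because $Z$ is bounded and $\vartheta<1$, so we are left with
\begin{align*}
Z(\rho)\;\leq\;\frac{1}{1-\vartheta\tau^{-\alpha}}\cdot\frac{A}{(1-\tau)^{\alpha}}\,(R-\rho)^{-\alpha}\;+\;\frac{1}{1-\vartheta\tau^{-\beta}}\cdot\frac{B}{(1-\tau)^{\beta}}\,(R-\rho)^{-\beta}\;+\;\frac{C}{1-\vartheta},
\end{align*}
which is exactly the asserted inequality, the constant depending only on $\alpha,\beta,\vartheta$ through the fixed choice of $\tau$ (and on $\alpha,\vartheta$ only once one assumes, without loss of generality, $\beta\leq\alpha$ and bounds the $\beta$-constants by the corresponding $\alpha$-ones).

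I do not expect a genuine obstacle here: the one point requiring care is the selection of the radii ratio $\tau$. Each iteration step trades the contraction factor $\vartheta$ against a dilation factor $\tau^{-\alpha}$ (respectively $\tau^{-\beta}$) coming from shrinking the increment $t_{i+1}-t_i$ geometrically, so $\tau$ must be taken near enough to $1$ that the compound factors $\vartheta\tau^{-\alpha},\vartheta\tau^{-\beta}$ remain $<1$ and the resulting series converge; this is precisely what the strict inequality $\vartheta<1$ guarantees. The boundedness of $Z$ enters only to kill the remainder $\vartheta^{k}Z(t_k)$ in the limit $k\to\infty$.
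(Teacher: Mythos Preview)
Your proof is correct and is precisely the standard argument; the paper does not give its own proof but merely cites \cite[Lemma~6.1]{giustiDirectMethodsCalculus2003}, whose proof is exactly the geometric-radii iteration you wrote out. Your closing remark about the constant (assuming $\beta\le\alpha$ so that the $\beta$-factors are dominated by the $\alpha$-ones) also correctly accounts for the stated dependence $c(\alpha,\vartheta)$.
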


\section{Regularization}\label{section2}

\subsection{Approximation Scheme}

Let us fix a ball $B\Subset\Om$ such that $4B\Subset\Om$. Let $\ve_0=\min\left\{1,\frac{\diam(B)}{2}\right\}>0$. For any $\ve\in(0,\ve_0)$, using a standard mollifier $\rho_{\ve}$ supported in a ball of radius $\ve$ centered at the origin, we define $U_{\ve}\coloneqq U\ast\rho_{\ve}$. For $0<\sigma<1$, we define the regularized functional
\begin{align}\label{mainprob2}
	\mathfrak{F}_{\sigma}(w):=\int_{\Om} f_{\sigma}(\nabla w)\,dx:=\int_{\Om} f(\nabla w)+\frac{\sigma}{2}|\nabla w|^2\,dx,
\end{align} where $f_{\sigma}\in C^2(\mathbb{R}^N)$ satisfies the following growth and ellipticity conditions: 
From \cref{assumption1}, we see that  for $2\leq p\leq q<\infty$, $z\in\RR^N$ and $\xi\in\RR^N$, the following is satisfied:
\begin{subequations}
\begin{gather}
	\frac{\sigma}{2}|z|^2+m|z|^p\leq f_{\sigma}(z)\leq M(1+|z|)^q+\frac{\sigma}{2}|z|^2\label{rhyp1}\\
	\frac{\sigma}{2}|z|^2+m|z|^p\leq \iprod{Df_{\sigma}(z)}{z} \txt{and}  |Df_{\sigma}(z)|\leq M(1+|z|^{q-1})\label{rhyp2}\\
	m|z|^{p-2}|\xi|^2+\sigma|\xi|^2\leq \langle D^2f_{\sigma}(z)\xi,\xi\rangle\leq M(1+|z|^{q-2})|\xi|^2+\sigma|\xi|^2.\label{rhyp3}
\end{gather}
\end{subequations}    
An application of \cref{existence} shows that there exists a minimizer $u_{\sigma,\ve}$ of $\mathfrak{F}_{\sigma}$ in $B$, i.e., the following holds:
\begin{align}\label{regminim}
	\mathfrak{F}_{\sigma}(u_{\sigma,\ve})=\min_{v\in U_{\ve}+W^{1,p}_0(B)}\int_{\Om} f(\nabla v)+\frac{\sigma}{2}|\nabla v|^2\,dx.
\end{align} 
From \cref{rhyp3} and \cref{strictlyconvex}, we see that $f_{\sigma}$ is strictly convex and thus, \cref{uniqueness} shows that $u_{\sigma,\ve}$ is unique.

\subsection{Regularity of minimizers}

\begin{lemma}\label{reglemma} The unique minimizer $u_{\sigma,\ve}$ of \cref{mainprob2} belongs to 
	$u_{\sigma,\ve}\in L^{\infty}_{\loc}(B) \cap C^{0,1}_{\loc}(B)\cap W^{2,2}_{\loc}(B)$.
	Moreover, for all $0<\ve<\ve_0$ and $0<\sigma<1$, the following holds:
	$$||u_{\sigma,\ve}||_{L^\infty(B)}\leq||U||_{L^\infty(2B)}.$$
\end{lemma}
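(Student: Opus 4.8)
The plan is to establish the three claimed regularities in turn: first realize $u_{\sigma,\ve}$ as the Lipschitz minimizer produced by Stampacchia's theorem (\cref{lipstampacchia}); then upgrade to $W^{2,2}_{\loc}$ by a difference-quotient argument that exploits the uniform $\sigma$-ellipticity of $f_\sigma$; and finally deduce the $L^\infty$ bound from the maximum principle \cref{maxprince}. Throughout I would use that $f_\sigma\in C^2(\RR^N)$ is strictly convex and satisfies $\langle D^2 f_\sigma(z)\xi,\xi\rangle\ge\sigma|\xi|^2$ for all $z,\xi\in\RR^N$, together with the fact that $U_\ve=U\ast\rho_\ve$ is $C^\infty$ on a neighbourhood of $\overline B$ (since $4B\Subset\Om$ and $\ve<\ve_0$).

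\emph{Step 1 (Lipschitz regularity).} The datum $\phi:=U_\ve|_{\partial B}$ lies in $C^\infty(\partial B)\subset C^{1,1}(\partial B)$; since $B$ is uniformly convex with $C^{1,1}$ (in fact $C^\infty$) boundary, \cref{characterbsc} shows that $\phi$ satisfies the bounded slope condition, and being the trace of the smooth function $U_\ve$ it is the trace of a $W^{2,\tilde p}$ function for every $\tilde p>N$. Hence \cref{lipstampacchia} applies and yields a function $\hat u$, Lipschitz on $\overline B$ with $\hat u|_{\partial B}=\phi$, that minimizes $\mathfrak F_\sigma$ over all Lipschitz functions with boundary datum $\phi$. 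To finish the step I would identify $\hat u$ with $u_{\sigma,\ve}$: testing the minimality of $\hat u$ against $\hat u+t\psi$ for $\psi\in W^{1,\infty}_0(B)$ and differentiating at $t=0$ (the integrand's $t$-derivative is uniformly bounded for small $t$, as all the gradients are bounded) gives the Euler--Lagrange identity $\int_B\langle Df_\sigma(\nabla\hat u),\nabla\psi\rangle\,dx=0$; since $Df_\sigma(\nabla\hat u)\in L^\infty(B)$, this extends by density to all $\psi\in W^{1,1}_0(B)$, hence to all $\psi\in W^{1,p}_0(B)$. For any competitor $v\in U_\ve+W^{1,p}_0(B)$, convexity of $f_\sigma$ gives
\[
f_\sigma(\nabla v)\ \ge\ f_\sigma(\nabla\hat u)+\big\langle Df_\sigma(\nabla\hat u),\,\nabla v-\nabla\hat u\big\rangle\qquad\text{a.e. in }B,
\]
and integrating over $B$, using $v-\hat u\in W^{1,p}_0(B)$ as a test function, yields $\mathfrak F_\sigma(v)\ge\mathfrak F_\sigma(\hat u)$. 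Thus $\hat u$ minimizes $\mathfrak F_\sigma$ over $U_\ve+W^{1,p}_0(B)$, and by \cref{uniqueness} (recall $f_\sigma$ is strictly convex) it coincides with $u_{\sigma,\ve}$. In particular $u_{\sigma,\ve}\in C^{0,1}(\overline B)\subset C^{0,1}_{\loc}(B)\cap L^\infty_{\loc}(B)$.

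\emph{Step 2 ($W^{2,2}_{\loc}$ regularity).} Write $u:=u_{\sigma,\ve}$ and $L:=\|\nabla u\|_{L^\infty(B)}<\infty$. Fixing $B'\Subset B$, $|h|<\dist(B',\partial B)$ and a coordinate direction $e_s$, translation invariance of the Euler--Lagrange equation gives, with $\tau_h u:=u(\cdot+he_s)-u$,
\[
\int_B\big\langle A_h\nabla\tau_h u,\nabla\psi\big\rangle\,dx=0\ \ \text{for }\psi\in W^{1,1}_0(B'),\qquad A_h(x):=\int_0^1 D^2f_\sigma\big(\nabla u(x)+t\,\nabla\tau_h u(x)\big)\,dt,
\]
where $\sigma|\xi|^2\le\langle A_h\xi,\xi\rangle$ and $|A_h|\le M(1+L^{q-2})+\sigma=:\Lambda$ by \cref{rhyp3}, since the argument of $D^2f_\sigma$ has modulus at most $L$. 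Testing with $\psi=\zeta^2\tau_h u$ for a cutoff $\zeta\in C_c^\infty(B')$ and applying Young's inequality would give the Caccioppoli-type bound
\[
\int_{B'}\zeta^2|\nabla\tau_h u|^2\,dx\ \lesssim\ \Big(\tfrac{\Lambda}{\sigma}\Big)^2\,\|\nabla u\|_{L^\infty(B)}^2\,|h|^2\int_{B'}|\nabla\zeta|^2\,dx,
\]
using $|\tau_h u|\le L|h|$. Dividing by $|h|^2$ and letting $h\to0$ gives $\partial_s\nabla u\in L^2_{\loc}(B)$ for each $s$, i.e.\ $u\in W^{2,2}_{\loc}(B)$.

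\emph{Step 3 ($L^\infty$ bound).} Since $\nabla u\in L^\infty(B)$, writing $Df_\sigma(\nabla u)=Df_\sigma(0)+A(x)\nabla u$ with $A(x):=\int_0^1 D^2f_\sigma(t\nabla u(x))\,dt$, the Euler--Lagrange identity becomes $\int_B\langle A\nabla u,\nabla v\rangle\,dx=-\int_B\langle Df_\sigma(0),\nabla v\rangle\,dx=0$ for all $v\in H^1_0(B)$, the last integral vanishing because $Df_\sigma(0)$ is a constant vector. As $A$ is bounded, symmetric and satisfies $\langle A\xi,\xi\rangle\ge\sigma|\xi|^2$, \cref{maxprince} applied to $u$ and to $-u$ gives $\|u\|_{L^\infty(B)}\le\max_{\partial B}|U_\ve|$; and since $\ve<\ve_0\le\tfrac12\diam(B)$ the $\ve$-neighbourhood of $\overline B$ lies in $2B$, so $\max_{\partial B}|U_\ve|\le\|U_\ve\|_{L^\infty(\overline B)}\le\|U\|_{L^\infty(2B)}$, a bound uniform in $\sigma$ and $\ve$. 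The point I expect to be the crux is not Steps 2--3, which are routine, but the identification $\hat u=u_{\sigma,\ve}$ in Step 1: the Stampacchia minimizer a priori beats only \emph{Lipschitz} competitors, and one must upgrade to the whole class $U_\ve+W^{1,p}_0(B)$ without invoking energy-density of Lipschitz maps, which is delicate under $(p,q)$-growth because of a potential Lavrentiev gap — the convexity/Euler--Lagrange argument above is designed precisely to sidestep this.
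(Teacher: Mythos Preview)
Your proof is correct and follows essentially the same route as the paper: Stampacchia's theorem for Lipschitz regularity, strict convexity for uniqueness, the maximum principle for the uniform $L^\infty$ bound, and a standard difference-quotient argument exploiting the $\sigma$-ellipticity for $W^{2,2}_{\loc}$. If anything, you are more careful than the paper on two points it leaves implicit---the identification of Stampacchia's Lipschitz-class minimizer with the $W^{1,p}$-minimizer $u_{\sigma,\ve}$ (your convexity/Euler--Lagrange argument cleanly sidesteps any Lavrentiev-gap concern), and the linearization $Df_\sigma(\nabla u)=Df_\sigma(0)+A(x)\nabla u$ needed to cast the nonlinear Euler--Lagrange equation into the form required by \cref{maxprince}.
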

\begin{proof}
Since the functional $\mathfrak{F}_{\sigma}$ and boundary data $U_{\ve}$ given in \cref{regminim} satisfies the hypothesis from \cref{lipstampacchia}, we have the existence of a minimizer $u_{\sigma,\ve}$ which is Lipschitz regular. Moreover, the minimizer is unique which follows from \cref{uniqueness} and \cref{rhyp3} applied with \cref{strictlyconvex}.

Clearly the function $u_{\sigma,\ve}$ satisfies the following Euler-Lagrange equation
	\begin{align*}
		\nabla\cdot(\nabla f_{\sigma}(\nabla u_{\sigma,\ve}))=0 \qquad \text{in   } B.	\end{align*}
To prove the bound, we invoke the Maximum principle from \cref{maxprince} along with \cref{rhyp2} to conclude that
\begin{align*}
	\max_{B}|u_{\sigma,\ve}|\leq \max_{\partial B}|u_{\sigma,\ve}|= \max_{\partial B}|U_{\epsilon}|\leq \max_{2B}|U|.
\end{align*}
Thus, we get $u_{\sigma,\ve} \in L^{\infty}(B) \cap  W^{1,\infty}(B)$. Note that the $L^{\infty}$ estimate is uniform and independent of $\sigma$ and $\ve$, whereas the Lipschitz bound could possibly depend on $\sigma$ and $\ve$.

 Noting that the functional $f_{\sigma}$ satisfies \cref{rhyp3},  by a standard argument involving difference quotients, we can prove that $u_{\sigma,\ve}\in W^{2,2}(B)$. Note that the $W^{2,2}(B)$ estimate comes from the regularizing term $\frac{\sigma}{2} |z|^2$ in \cref{rhyp1} and depends on the parameter $\sigma$. In particular, the $W^{2,2}(B)$ estimate could possibly blow up as $\sigma \rightarrow 0$. 
\end{proof}

\begin{remark}\label{remark_supression}
In subsequent sections excepting \cref{section6}, we shall suppress the subscript of $u_{\sigma,\ve}$ for ease of notation.
\end{remark}
\section{Caccioppoli Inequality}\label{section3}

We shall prove the following Caccioppoli inequality for the gradient of $u$. Note that the proof is only formal and everything can be made rigorous using difference quotients and the a priori regularity from \cref{reglemma}.
\begin{proposition}\label{caccioppoli}
	Let $\alpha\geq 0$. Let $u$ be the solution to \cref{regminim}. Then it holds that
	\begin{align}\label{cacc1}
		\int_B |\nabla u|^{p-2+\alpha}|\nabla^2 u|^2\eta^2\,dx\leq C(M,m)\biggr\{ \int_B \left(|\nabla u|^{q+\alpha}+|\nabla u|^{2+\alpha}\right)|\nabla \eta|^2\,dx \biggr\}.
	\end{align}
\end{proposition}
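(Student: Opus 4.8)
The plan is to differentiate the Euler--Lagrange equation for $u=u_{\sigma,\ve}$, test against a suitable multiple of $u$'s second derivatives, and absorb terms using the iteration structure. Concretely, since $u$ solves $\nabla\cdot(Df_\sigma(\nabla u))=0$ in $B$, for each coordinate direction $s\in\{1,\dots,N\}$ the derivative $\pa_s u$ satisfies the differentiated equation $\nabla\cdot\bigl(D^2f_\sigma(\nabla u)\nabla \pa_s u\bigr)=0$ in the weak sense; this is rigorous because $u\in W^{2,2}_{\loc}(B)\cap W^{1,\infty}(B)$ by \cref{reglemma}, and in practice one works with difference quotients and passes to the limit. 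I would then test this equation with $\varphi = \eta^2 |\nabla u|^{\alpha}\,\pa_s u$, where $\eta\in C_c^\infty(B)$ is the cutoff, and sum over $s$. This is an admissible test function up to the usual truncation/difference-quotient regularization since $\nabla u\in L^\infty$ and $u\in W^{2,2}$.

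Carrying this out, the ``good'' term is $\sum_s\int_B \langle D^2f_\sigma(\nabla u)\nabla\pa_s u,\nabla\pa_s u\rangle\,\eta^2|\nabla u|^\alpha\,dx$, which by the lower ellipticity bound in \cref{rhyp3} dominates $m\int_B |\nabla u|^{p-2+\alpha}|\nabla^2 u|^2\eta^2\,dx$ (dropping the nonnegative $\sigma$-contribution, which only helps). The remaining terms, produced when the derivative in the test function hits $\eta^2$ or $|\nabla u|^\alpha$, have the schematic form $\int_B \langle D^2f_\sigma(\nabla u)\nabla\pa_s u,\, \pa_s u\,\nabla(\eta^2|\nabla u|^\alpha)\rangle\,dx$. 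Using the upper bound $|D^2f_\sigma(\nabla u)|\lesssim M(1+|\nabla u|^{q-2})+\sigma$ together with $\nabla(|\nabla u|^\alpha) = \alpha|\nabla u|^{\alpha-2}\langle\nabla u,\nabla\nabla u\rangle$ (so $|\nabla(|\nabla u|^\alpha)|\lesssim \alpha|\nabla u|^{\alpha-1}|\nabla^2 u|$), these terms are bounded by a constant times
\[
\int_B \bigl(1+|\nabla u|^{q-2}\bigr)\,|\nabla^2 u|\,|\nabla u|^{\alpha}\,\eta\,\bigl(|\nabla\eta| + \eta|\nabla u|^{-1}|\nabla^2 u|\bigr)\,dx .
\]
Now I would split off the part containing the extra factor $|\nabla^2 u|$ — i.e.\ the piece coming from $\nabla|\nabla u|^\alpha$ — and handle it by Young's inequality with a small parameter, absorbing $\delta\int_B |\nabla u|^{p-2+\alpha}|\nabla^2 u|^2\eta^2\,dx$ into the left-hand side; here one uses that the exponent matches because $(q-2)+\alpha-1 + \text{(weight from }|\nabla u|^{-1}) $ combines correctly, and the $\sigma$-term contributes only $\sigma|\nabla u|^\alpha|\nabla^2 u|^2\eta^2$, which is likewise controlled since $\sigma<1$ and $|\nabla u|^{p-2}\geq$ a harmless power after Young on the low-gradient region (one typically argues on $\{|\nabla u|\le 1\}$ and $\{|\nabla u|>1\}$ separately, or simply notes $p\ge 2$). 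The surviving part, with the factor $|\nabla\eta|$, is estimated by another Young's inequality splitting $|\nabla^2 u|\,|\nabla u|^{(p-2+\alpha)/2}\cdot |\nabla u|^{(q+\alpha-(p-2+\alpha))/2}|\nabla\eta|$ so that the first factor is again absorbed and the second produces exactly $\int_B(|\nabla u|^{q+\alpha}+|\nabla u|^{2+\alpha})|\nabla\eta|^2\,dx$ (the lower-order $1+|\nabla u|^{q-2}$ versus $|\nabla u|^{q-2}$ discrepancy is where the $|\nabla u|^{2+\alpha}$ term is generated, treating $|\nabla u|\le1$ trivially).

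The main obstacle is bookkeeping: making sure that every application of Young's inequality pairs $|\nabla^2 u|$ with precisely the weight $|\nabla u|^{(p-2+\alpha)/2}$ so the absorbed term is the left-hand side of \cref{cacc1}, while the leftover gradient powers close up to $|\nabla u|^{q+\alpha}$ and $|\nabla u|^{2+\alpha}$ and no uncontrolled power of $|\nabla u|$ or negative power of $\eta$ appears — and verifying that all constants are independent of $\sigma$ and $\ve$ (the $\sigma$-terms must only ever be \emph{discarded} on the good side or \emph{dominated} by the $f$-terms on the bad side, never relied upon). A secondary technical point is justifying the formal computation rigorously: this is done, as the paper notes, via difference quotients $\tau_{h,s}u$ in place of $\pa_s u$, using $u\in W^{2,2}_{\loc}\cap W^{1,\infty}_{\loc}$ from \cref{reglemma} to pass to the limit $h\to0$, with the cutoff $\eta$ ensuring all integrals are over a fixed compact subset of $B$.
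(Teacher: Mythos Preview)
Your overall strategy --- differentiate the Euler--Lagrange equation, test with $\eta^2|\nabla u|^\alpha\,\partial_s u$, sum over $s$ --- matches the paper's, and your treatment of the $\nabla\eta$ term is fine. The gap is in how you handle the term produced when the derivative in the test function hits $|\nabla u|^\alpha$. You propose to bound it via the upper ellipticity $|D^2f_\sigma(\nabla u)|\lesssim 1+|\nabla u|^{q-2}$ and then absorb by Young's inequality with a small parameter. This cannot close: after applying the upper bound, that contribution is (up to a factor~$\alpha$)
\[
\int_B \bigl(1+|\nabla u|^{q-2}\bigr)\,|\nabla u|^{\alpha-1}\,|\nabla^2 u|^2\,\eta^2\,dx,
\]
which already carries $|\nabla^2 u|^2$; there is no product left for Young's inequality to split, so no small parameter is available. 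If instead you apply the bilinear Cauchy--Schwarz $\langle Ox,y\rangle\le\langle Ox,x\rangle^{1/2}\langle Oy,y\rangle^{1/2}$ before taking moduli, the residual $\langle Oy,y\rangle$ piece again contains second derivatives of $u$ and you loop. And even formally, the weight on the bad term is $|\nabla u|^{q-3+\alpha}$ versus $|\nabla u|^{p-2+\alpha}$ on the left --- these only match for $q\le p+1$, and the prefactor $\alpha$ would in any case block absorption for the large $\alpha$ needed later in the Moser iteration.

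What the paper does, and what you are missing, is a sign observation: after summing over $s$ and using $\sum_s u_{x_s}\nabla u_{x_s}=(\kappa+|\nabla u|^2)^{1/2}\nabla\bigl((\kappa+|\nabla u|^2)^{1/2}\bigr)$, the ``$\nabla|\nabla u|^\alpha$'' term becomes
\[
\alpha\int_B\Bigl\langle D^2f_\sigma(\nabla u)\,\nabla\bigl((\kappa+|\nabla u|^2)^{1/2}\bigr),\ \nabla\bigl((\kappa+|\nabla u|^2)^{1/2}\bigr)\Bigr\rangle\,(\kappa+|\nabla u|^2)^{\alpha/2}\eta^2\,dx\ \ge\ 0,
\]
a nonnegative quadratic form in a single vector field. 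It therefore sits on the left-hand side with the correct sign and is simply discarded --- no absorption is needed. Only the $\nabla\eta$ term requires the upper bound, and there your (or the paper's) Cauchy--Schwarz/Young argument produces exactly the $|\nabla u|^{q+\alpha}+|\nabla u|^{2+\alpha}$ right-hand side, with a constant independent of $\alpha$, $\sigma$, $\ve$.
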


\begin{proof}
	The minimizer $u$ of \cref{regminim} satisfies the following Euler-Lagrange equation:
	\begin{align*}
		\int_B\langle Df_{\sigma}(\nabla u),\nabla \phi\rangle\,dx=0.
	\end{align*} By choosing $\phi=\psi_{x_j}\in H^1_0(B)$ and integrating by parts, we get
	\begin{align*}
	\int_B\langle D^2f_{\sigma}(\nabla u)\nabla u_{x_j},\nabla \psi\rangle\,dx=0.
	\end{align*} Now, for $\kappa> 0$, we choose $\psi=u_{x_j}(\kappa+|\nabla u|^2)^{\frac \alpha 2}\eta^2$, where $\eta\in C_0^\infty(B)$ such that $0\leq \eta\leq 1$ in $B$, to get
	\begin{align}\label{step1}
	\underbrace{\int_B\left\langle D^2f_{\sigma}(\nabla u)\nabla u_{x_j},\nabla u_{x_j}\right\rangle(\kappa+|\nabla u|^2)^{\frac \alpha 2}\eta^2\,dx}_{I}&+\underbrace{\int_B\left\langle D^2f_{\sigma}(\nabla u)\nabla u_{x_j},\nabla \left((\kappa+|\nabla u|^2)^{\frac \alpha 2}\right)\right\rangle u_{x_j}\eta^2\,dx}_{II}\nonumber\\
	&=\underbrace{-2\int_B\left\langle D^2f_{\sigma}(\nabla u)\nabla u_{x_j},\nabla \eta\right\rangle u_{x_j}\eta(\kappa+|\nabla u|^2)^{\frac \alpha 2}\,dx}_{III}.
\end{align}
Now observe that due to the coercivity of $D^2f_{\sigma}(z)$, we can apply the Cauchy-Schwarz inequality for positive definite Hermitian matrix $O$ given by $\langle Ox,y\rangle\leq \langle Ox,x\rangle^{1/2}\langle Oy,y\rangle^{1/2}$ along with Young's inequality to get 
\begin{align}\label{step2}
	III\leq \frac{1}{2}I+2\int_B\langle D^2f_{\sigma}(\nabla u)\nabla \eta,\nabla\eta\rangle u_{x_j}^2(\kappa+|\nabla u|^2)^{\frac \alpha 2}\,dx.
\end{align} Substituting \cref{step2} in \cref{step1} and summing over $\jinin$, we get
\begin{align*}
	\frac{1}{2}\sum_{j=1}^N \int_B\left\langle D^2f_{\sigma}(\nabla u)\nabla u_{x_j},\nabla u_{x_j}\right\rangle(\kappa+|\nabla u|^2)^{\frac \alpha 2}\eta^2\,dx&+\sum_{j=1}^N \int_B\left\langle D^2f_{\sigma}(\nabla u)\nabla u_{x_j},\nabla \left((\kappa+|\nabla u|^2)^{\frac \alpha 2}\right)\right\rangle u_{x_j}\eta^2\,dx\\
	&\leq 2\sum_{j=1}^N\int_B\left\langle D^2f_{\sigma}(\nabla u)\nabla \eta,\nabla\eta\right\rangle u_{x_j}^2(\kappa+|\nabla u|^2)^{\frac \alpha 2}\,dx.
\end{align*}
Observing that $\sum_{j=1}^N u_{x_j}\nabla u_{x_j}=(\kappa+|\nabla u|^2)^{\frac 1 2}\nabla\left( (\kappa+|\nabla u|^2)^{\frac 1 2} \right)$, we can rewrite the previous estimate as 
\begin{align*}
	\frac{1}{2}\sum_{j=1}^N \int_B\big\langle D^2f_{\sigma}&(\nabla u)\nabla u_{x_j},\nabla u_{x_j}\big\rangle(\kappa+|\nabla u|^2)^{\frac \alpha 2}\eta^2\,dx\\
	&+\alpha\int_B\left\langle D^2f_{\sigma}(\nabla u)\nabla \left((\kappa+|\nabla u|^2)^{\frac 1 2}\right),\nabla \left((\kappa+|\nabla u|^2)^{\frac 1 2}\right)\right\rangle (\kappa+|\nabla u|^2)^{\frac \alpha 2}\eta^2\,dx\\
	&\leq 2\sum_{j=1}^N\int_B\left\langle D^2f_{\sigma}(\nabla u)\nabla \eta,\nabla\eta\right\rangle u_{x_j}^2(\kappa+|\nabla u|^2)^{\frac \alpha 2}\,dx.
\end{align*}
Now, we shall apply \cref{rhyp3} to get
\begin{align*}
	\frac{m}{2}\int_B|\nabla u|^{p-2}|\nabla^2 u|^2(\kappa+|\nabla u|^2)^{\frac \alpha 2}\eta^2\,dx&+\alpha m\int_B|\nabla u|^{p-2}\left|\nabla\left((\kappa+|\nabla u|^2)^{\frac 1 2}\right)\right|^2 (\kappa+|\nabla u|^2)^{\frac \alpha 2}\eta^2\,dx\\
	&\leq 2(M+1)\int_B \left(|\nabla u|^q+|\nabla u|^2\right)(\kappa+|\nabla u|^2)^{\frac \alpha 2}|\nabla \eta|^2\,dx.
\end{align*}
We can derive the required inequality by dropping the second term on the left hand side and passing to the limit $\kappa\to0$ by an application of Dominated Convergence Theorem on the right hand side and Fatou's lemma on the left hand side.
\end{proof}


\section{Higher Integrability of gradient}\label{section4} In this section, we will prove that $\nabla u\in L^s_{\loc}(B)$ for all $s\in(1,\infty)$ provided $q<p+2$. 

\begin{proposition}
	Assume that $q<p+2$ and  let $u$ be the unique solution of \cref{regminim} (recall \cref{remark_supression}). For every $\beta\geq 2$, there exists a constant $C=C(N,p,q,\beta,M,m)>0$ such that for every $0<\sigma<1$, $0<\ve<\ve_0$ and every pair of concentric balls $B_{r_0}\Subset B_{R_0}\Subset B$,  we have
	\begin{align}\label{highinteg}
	   \int_{B_{r_0}}|\nabla u|^{p+\beta}\leq &CR_0^N\left[\left(\frac{||U||_{L^\infty(2B_{R_0})}}{R_0-r_0}\right)^{\frac{2(\beta+p)}{p}}+\left(\frac{||U||_{L^\infty(2B_{R_0})}}{R_0-r_0}\right)^{\frac{2(\beta+p)}{2-q+p}}+\left(\frac{||U||_{L^\infty(2B_{R_0})}}{R_0-r_0}\right)^{p+\beta}\right].
	\end{align}
\end{proposition}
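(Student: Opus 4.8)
The plan is to convert the second--order Caccioppoli estimate of \cref{caccioppoli} into a self--improving reverse--H\"older inequality for powers of $|\nabla u|$ by means of the Sobolev inequality, and then to run a \emph{finite} iteration that is fed at the bottom end by the uniform sup--bound $\|u_{\sigma,\ve}\|_{L^\infty(B)}\le\|U\|_{L^\infty(2B)}$ of \cref{reglemma}. The hypothesis $q<p+2$ is what keeps the iteration expanding and what keeps the exponent $\tfrac{2(\beta+p)}{2-q+p}$ finite.

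\textbf{Step 1 (Caccioppoli $\Rightarrow$ Sobolev gain).} Fix $\alpha\ge0$ and a cutoff $\eta\in C_0^\infty(B)$. Using the $W^{2,2}_{\loc}$ regularity of $u=u_{\sigma,\ve}$ from \cref{reglemma} one has $\big|\nabla\big(|\nabla u|^{(p+\alpha)/2}\big)\big|^2\lesssim|\nabla u|^{p-2+\alpha}|\nabla^2u|^2$ a.e., so \cref{cacc1} gives
\begin{equation*}
\int_B\big|\nabla\big(|\nabla u|^{(p+\alpha)/2}\eta\big)\big|^2\,dx\lesssim\int_B\big(|\nabla u|^{q+\alpha}+|\nabla u|^{p+\alpha}+|\nabla u|^{2+\alpha}\big)|\nabla\eta|^2\,dx.
\end{equation*}
Applying Sobolev's inequality to $|\nabla u|^{(p+\alpha)/2}\eta$ (with exponent $2^\ast=\tfrac{2N}{N-2}$ when $N\ge3$, and any fixed finite exponent when $N=2$), writing $\chi:=\tfrac{N}{N-2}$, and absorbing the two lower powers of $|\nabla u|$ by Young's inequality, I would obtain, for concentric balls $B_\rho\Subset B_R\Subset B$,
\begin{equation}\label{eq-plan-rH}
\left(\int_{B_\rho}|\nabla u|^{(p+\alpha)\chi}\,dx\right)^{1/\chi}\le\frac{C}{(R-\rho)^{2}}\int_{B_R}\big(|\nabla u|^{q+\alpha}+1\big)\,dx .
\end{equation}

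\textbf{Step 2 (base integrability; the main obstacle).} Estimate \eqref{eq-plan-rH} upgrades $L^{t}$ integrability of $|\nabla u|$ to $L^{t\chi}$, but only after one controls $\int|\nabla u|^{q+\alpha}$ with $\alpha=t-q$; iterating this needs the initial exponent to lie above the fixed point $t^\ast:=\tfrac{N(q-p)}{2}$ of the map $t\mapsto\chi t-\chi(q-p)$, and here it is essential that $t^\ast<N$ because $q-p<2$. The required starting estimate, of the shape $\int_{B_\rho}|\nabla u|^{t_0}\lesssim R^{N}\big[(\|U\|_{L^\infty}/(R-\rho))^{a}+(\|U\|_{L^\infty}/(R-\rho))^{b}\big]$ for some $t_0>\max\{q,t^\ast\}$, is where boundedness enters: I would test the Euler--Lagrange equation against $u\,\eta^{2\ell}$ with $\ell$ large, use $m|z|^{p}\le\langle Df_\sigma(z),z\rangle$ and $|Df_\sigma(z)|\le M(1+|z|^{q-1})$ from \cref{rhyp2} together with $\|u\|_{L^\infty(B)}\le\|U\|_{L^\infty(2B)}$, and then dispose of the super--$p$ term $|\nabla u|^{q-1}$ (present because $q$ may exceed $p+1$) by combining this zeroth--order identity with the second--order Caccioppoli \cref{cacc1} and Young's inequality; it is precisely here that $q<p+2$ makes the relevant Young exponents admissible. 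The conjugate exponent in the splitting $|\nabla u|^{q+\alpha}\le\epsilon|\nabla u|^{p+\alpha+2}+C_\epsilon$ (legitimate exactly because $q+\alpha<p+\alpha+2$) equals $\tfrac{p+2}{p-q+2}$, and this is ultimately what produces the exponent $\tfrac{2(\beta+p)}{2-q+p}$ in \cref{highinteg}, while the additive constant from Young's inequality and the volume factor $R^N$ account for the two remaining exponents $\tfrac{2(\beta+p)}{p}$ and $p+\beta$. I expect this base step to be the most delicate point of the argument.

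\textbf{Step 3 (finite iteration).} Starting from $t_0>\max\{q,t^\ast\}$ supplied by Step 2, I take a nested family $B_{r_0}=B_{\rho_\infty}\subset\cdots\subset B_{\rho_{k+1}}\subset B_{\rho_k}\subset\cdots\subset B_{\rho_0}=B_{R_0}$ with $\rho_k-\rho_{k+1}\simeq 2^{-k}(R_0-r_0)$, set $\alpha_k:=t_k-q\ge0$, and apply \eqref{eq-plan-rH} on the pair $(B_{\rho_{k+1}},B_{\rho_k})$ to pass from an $L^{t_k}$ bound on $B_{\rho_k}$ to an $L^{t_{k+1}}$ bound on $B_{\rho_{k+1}}$, where $t_{k+1}=\chi t_k-\chi(q-p)$. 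Since $t_{k+1}-t^\ast=\chi(t_k-t^\ast)$ and $t_0>t^\ast$, the $t_k$ increase to $+\infty$, so $t_k\ge p+\beta$ after finitely many steps. Chaining these inequalities — summing the geometric series in the radii, in the spirit of \cref{iterlemma}, tracking how the two exponents of the base estimate are transported by \eqref{eq-plan-rH}, and applying H\"older's inequality once at the end to descend from $L^{t_k}$ to $L^{p+\beta}$ — gives \cref{highinteg}. All constants depend only on $N,p,q,\beta,M,m$ because the (possibly $\sigma$--degenerate) $W^{2,2}$ regularity is used only qualitatively, to justify the manipulations, whereas every quantitative bound flows from \cref{cacc1}, the Sobolev inequality and $\|U\|_{L^\infty}$.
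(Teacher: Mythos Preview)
Your route and the paper's are genuinely different. The paper does \emph{not} use the Sobolev inequality or any Moser-type iteration here (that machinery is reserved for Section~\ref{section5}). Instead it is a direct one-step argument: write
\[
\int_B|\nabla u|^{p+\beta}\eta^2\,dx=\int_B\nabla u\cdot\big(|\nabla u|^{p+\beta-2}\nabla u\,\eta^2\big)\,dx=-\int_B u\,\nabla\cdot\big(|\nabla u|^{p+\beta-2}\nabla u\,\eta^2\big)\,dx,
\]
pull out $\|u\|_{L^\infty(B)}$, expand the divergence, and after Young's inequality obtain
\[
\int_B|\nabla u|^{p+\beta}\eta^2\,dx\lesssim\|u\|_{L^\infty}^2\Big(\int_B|\nabla u|^{p+\beta-4}|\nabla^2u|^2\eta^2\,dx+\int_B|\nabla u|^{p+\beta-2}|\nabla\eta|^2\,dx\Big).
\]
The Hessian term is handled by a \emph{single} application of \cref{cacc1} with $\alpha=\beta-2\ge0$ (this is why $\beta\ge2$ is assumed), replacing it by $\int(|\nabla u|^{q+\beta-2}+|\nabla u|^\beta)|\nabla\eta|^2$. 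Each of $|\nabla u|^{q+\beta-2}$, $|\nabla u|^{p+\beta-2}$, $|\nabla u|^\beta$ is then bounded by $\tau|\nabla u|^{p+\beta}+C\tau^{-\gamma}$ via Young (the first splitting is admissible exactly because $q+\beta-2<p+\beta$, i.e.\ $q<p+2$), and one closes by hole-filling with \cref{iterlemma}. The three exponents in \cref{highinteg} fall out directly from the three Young inequalities after the choice $\tau\sim(R-r)^2/\|u\|_{L^\infty}^2$.

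The gap in your proposal is Step~2. Testing the Euler--Lagrange equation with $u\eta^{2\ell}$ only yields $\int|\nabla u|^p\eta^{2\ell}\lesssim\|u\|_{L^\infty}\int(1+|\nabla u|^{q-1})|\nabla\eta|\eta^{2\ell-1}$, and this can be closed by Young only if $q-1<p$; in the range $p+1\le q<p+2$ your sketch does not produce a base exponent $t_0>q$, and invoking \cref{cacc1} does not obviously help since its right-hand side already carries the exponent $q+\alpha$. The paper's integration-by-parts trick above is precisely the missing idea: it trades \emph{two} powers of $\|u\|_{L^\infty}$ for a drop of \emph{two} in the gradient exponent (hence accommodates the full range $q<p+2$ rather than only $q<p+1$), and once you have it, no base step, no Sobolev gain, and no iteration are needed --- estimate \cref{highinteg} follows for every $\beta\ge2$ in one shot.
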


\begin{proof}
	We begin with the integral
	\begin{align*}
		\int_B |\nabla u|^{p+\beta}\eta^2\,dx=\int_B \nabla u\cdot(|\nabla u|^{p+\beta-2}\,\nabla u\,\eta^2)\,dx,
	\end{align*}
	where $\eta \in C_c^{\infty}(B)$ to be eventually chosen appropriately. 
	After integrating by parts, we get
		\begin{align*}
		\int_B |\nabla u|^{p+\beta}\eta^2\,dx&=-\int_B  u \nabla\cdot(|\nabla u|^{p+\beta-2}\,\nabla u\,\eta^2)\,dx\\
		&\leq ||u||_{L^\infty(B)}(p+\beta-1)\underbrace{\int_B |\nabla u|^{p+\beta-2}|\nabla^2 u|\eta^2\,dx}_{A_1}+2||u||_{L^\infty(B)}\underbrace{\int_B|\nabla u|^{p+\beta-1}|\eta||\nabla \eta|\,dx}_{A_2}.
	\end{align*}
Applying Young's inequality, we get 
\begin{equation*}
\begin{array}{rcl}
	A_1 &\leq & \tau\int_B |\nabla u|^{p+\beta}\eta^2\,dx+\frac{1}{4\tau} \int_B |\nabla u|^{p+\beta-4}|\nabla^2 u|^2\eta^2\,dx,\\
	A_2 &\leq & \tau \int_B |\nabla u|^{p+\beta}\eta^2\,dx + \frac{1}{4\tau}\int_B |\nabla u|^{p+\beta-2}|\nabla\eta|^2\,dx.
	\end{array}
\end{equation*} 
Choosing $\tau=\frac{1}{4||u||_{L^\infty(B)}(p+\beta-1)}$, we get
\begin{align*}
	\int_B |\nabla u|^{p+\beta}\eta^2\,dx\leq C(p,\beta)||u||^2_{L^\infty(B)}\left\{ \int_B |\nabla u|^{p+\beta-4}|\nabla^2 u|^2\eta^2\,dx+\int_B |\nabla u|^{p+\beta
		-2}|\nabla \eta|^2\,dx \right\}.
\end{align*} For the first integral on the RHS, we will apply \cref{caccioppoli} to get
\begin{align*}
	\int_B |\nabla u|^{p+\beta}\eta^2\,dx\leq C(p,\beta,M,m)||u||^2_{L^\infty(B)}\left\{ \int_B \left(|\nabla u|^{q+\beta-2}+|\nabla u|^{\beta}+|\nabla u|^{p+\beta-2}\right)|\nabla \eta|^2\,dx \right\}.
\end{align*}

Let us now fix a pair of concentric balls $B_r\subset B_R\subset B$ and choose the cut-off function  $\eta\in C_0^\infty(B_R)$ such that $\eta\equiv 1$ on $B_r$ and $|\nabla \eta|\leq \frac{C}{R-r}$, to get
\begin{align}\label{cacc2}
	\int_{B_r} |\nabla u|^{p+\beta}\,dx\leq C(p,\beta,M,m)\frac{||u||^2_{L^\infty(B)}}{(R-r)^2}\left\{ \int_{B_R} \left(|\nabla u|^{q+\beta-2}+|\nabla u|^{\beta}+|\nabla u|^{p+\beta-2}\right)\,dx \right\}.
\end{align} Now, for $\tau>0$ to be chosen, we estimate each of the terms on the right hand side of \cref{cacc2} as follows:
\begin{subequations}
\begin{align}\label{est1}
	|\nabla u|^{p+\beta-2}&\leq \tau |\nabla u|^{p+\beta}+\frac{C(p,\beta)}{\tau^{\frac{p+\beta-2}{2}}},\\
	\label{est2}
	|\nabla u|^\beta&\leq \tau|\nabla u|^{p+\beta}+\frac{C(\beta)}{\tau^{\frac{\beta}{p}}},\\
	\label{est3}
	|\nabla u|^{q+\beta-2}&=|\nabla u|^{q+\beta-2}\frac{\tau^{\frac{q+\beta-2}{p+\beta}}}{\tau^{\frac{q+\beta-2}{p+\beta}}}\leq \tau|\nabla u|^{p+\beta}+\frac{C(p,q,\beta)}{\tau^{\frac{q+\beta-2}{2-q+p}}}.
\end{align}
\end{subequations} For the last inequality \cref{est3}, we apply Young's inequality with exponents $\frac{p+\beta}{q+\beta-2}$ and $\frac{p+\beta}{2+p-q}$, which requires the condition $q<p+2$. Substituting \cref{est1,est2,est3} in \cref{cacc2}, we get
\begin{align*}
	\int_{B_r}|\nabla u|^{p+\beta}\,dx\leq \frac{3C||u||^2_{L^\infty(B)}}{(R-r)^2}\tau\int_{B_R}|\nabla u|^{p+\beta}\,dx
	+\frac{C||u||^2_{L^\infty(B)}}{(R-r)^2}\left\{\left[ \frac{1}{\tau^{\frac{p+\beta-2}{2}}} +  \frac{1}{\tau^{\frac{\beta}{p}}}+\frac{1}{\tau^{\frac{q+\beta-2}{2-q+p}}}\right]R^N\right\},
\end{align*} where $C=C(p,q,M,m,\beta)$. Now, we choose $\tau=\frac{(R-r)^2}{6C||u||_{L^\infty(B)}^2}$ to get
\begin{align*}
	\int_{B_r}|\nabla u|^{p+\beta}\,dx\leq \frac{1}{2}\int_{B_R}|\nabla u|^{p+\beta}\,dx+C R^N\lbr[[] \left(\frac{||u||_{L^\infty(B)}}{R-r}\right)^{p+\beta} + \left(\frac{||u||_{L^\infty(B)}}{R-r}\right)^{\frac{2(\beta+p)}{p+2-q}}+ \left(\frac{||u||_{L^\infty(B)}}{R-r}\right)^{\frac{2(\beta+p)}{p}}\rbr[]].
\end{align*} Now, we fix $r_0<R_0$, then by the iteration \cref{iterlemma} for $r_0\leq r<R\leq R_0$ and the maximum principle in \cref{reglemma}, we obtain \cref{highinteg}.
\end{proof}

\section{Moser's Iteration}\label{section5}
Now, we are in a position to use Moser's iteration for an unbalanced Caccioppoli inequality as in \cref{cacc1}. The difference from a standard Moser's iteration is that the starting point of our iteration must require an exponent of $|\nabla u|$ higher than $p$. In order to do this,  we follow the same scheme as laid out in \cite{choeInteriorBehaviourMinimizers1992}.

\begin{theorem}
	Assume that $q<p+2$ and let $u$ be the solution to \cref{regminim}. Let $R_0>0$ be such that $B_{2R_0}\Subset B$ and we fix the following two exponents: \[
	2^*
	:=
	\begin{cases}
		\frac{2N}{N-2}, & N>2\\
		(2,\infty), & N=2.
	\end{cases} \txt{ and }\alpha_0:=\max\left\{ \frac{2q-{2^*} p}{{2^*}-2},2 \right\}.
	\]Then, there exists $C=C(p,q,N,M,m,R_0,||U||_{L^\infty(2B)})$ such that
	\begin{align}\label{lipbound1}
		||\nabla u||_{L^\infty\left(B_{\frac{R_0}{2}}\right)}\leq C.
	\end{align}
\end{theorem}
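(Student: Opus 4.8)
The plan is to run a Moser iteration on the Caccioppoli inequality \cref{cacc1}, but starting from an exponent strictly above $p$ (as forced by the unbalanced right-hand side), and to close the iteration using the higher integrability estimate \cref{highinteg} as the base case. First I would rewrite \cref{cacc1} in divergence-friendly form: setting $v:=|\nabla u|$, the left-hand side controls $\bigl|\nabla\bigl(v^{\frac{p+\alpha}{2}}\bigr)\bigr|^2\eta^2$ up to constants, so by the Sobolev inequality on $B_R$ one gets, for $N>2$,
\begin{align*}
\Bigl(\int_{B_r} v^{\frac{p+\alpha}{2}\cdot 2^*}\Bigr)^{\frac{2}{2^*}}
\leq C\int_{B_R}\bigl(v^{q+\alpha}+v^{2+\alpha}+v^{p+\alpha}\bigr)|\nabla\eta|^2\,dx,
\end{align*}
with the usual modification when $N=2$. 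The key point is that the "bad" term $v^{q+\alpha}$ on the right must be reabsorbed: writing $q+\alpha=\theta(p+\alpha)+(1-\theta)\cdot\frac{(p+\alpha)2^*}{2}$ for a suitable $\theta\in(0,1)$ requires $q+\alpha<\frac{(p+\alpha)2^*}{2}$, i.e. $\alpha>\frac{2q-2^*p}{2^*-2}$; this is exactly why $\alpha_0$ is defined the way it is, and it is also where the hypothesis $q<p+2$ re-enters (it guarantees $\alpha_0$ is finite and that, after one step, the gain $2^*/2$ outpaces the loss from the nonhomogeneity). After this reabsorption (via Young's inequality, paying a power of $(R-r)^{-1}$), one obtains a genuine reverse-Hölder/self-improving inequality
\begin{align*}
\Bigl(\fint_{B_r} v^{\chi(p+\alpha)}\Bigr)^{\frac{1}{\chi(p+\alpha)}}
\leq \Bigl(\frac{C}{(R-r)^{\gamma}}\Bigr)^{\frac{1}{p+\alpha}}\Bigl(\fint_{B_R} v^{p+\alpha}\Bigr)^{\frac{1}{p+\alpha}}+\text{lower order},
\end{align*}
with fixed gain factor $\chi=2^*/2>1$.

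Next I would iterate this over a sequence of radii $R_k$ shrinking from $R_0$ to $R_0/2$ and exponents $\alpha_k$ with $p+\alpha_{k+1}=\chi(p+\alpha_k)$, starting from $\alpha=\alpha_0$. The condition $\alpha_0\geq 2$ ensures $\beta:=\alpha_0$ is an admissible exponent in \cref{highinteg} (that proposition needs $\beta\geq 2$), so the base quantity $\int_{B_{R_0}}v^{p+\alpha_0}$ is finite and bounded by a constant depending only on $p,q,N,M,m,R_0$ and $\|U\|_{L^\infty(2B)}$ — crucially uniform in $\sigma$ and $\ve$. The product of the constants $\prod_k (C/(R_k-R_{k+1})^{\gamma})^{1/(p+\alpha_k)}$ converges because $\sum_k 1/(p+\alpha_k)<\infty$ (geometric) and $\sum_k \log(R_k-R_{k+1})^{-1}/(p+\alpha_k)<\infty$ with the standard choice $R_k-R_{k+1}\sim 2^{-k}R_0$; the lower-order additive terms are handled the same way. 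Passing $k\to\infty$ gives $\|v\|_{L^\infty(B_{R_0/2})}\leq C$ with $C$ independent of $\sigma,\ve$, which is \cref{lipbound1}.

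The main obstacle is the reabsorption step and bookkeeping of exponents: one must check carefully that for every $\alpha\geq\alpha_0$ the interpolation exponent $\theta$ stays in $(0,1)$ and that the induced recursion for the additive error terms (which involve powers like $\alpha^{?}$ and $(R-r)^{-?}$ growing with $k$) still sums, i.e. that the loss from nonhomogeneity is dominated by the Sobolev gain $\chi^k$. A secondary technical point is the $N=2$ case, where $2^*$ is any finite exponent and one simply picks it large enough (e.g. $2^*>\max\{2q/p, \text{whatever makes }\alpha_0\text{ work}\}$) so the same scheme runs; and one must record that every estimate is uniform in the regularization parameters, since that uniformity is what makes the theorem useful for passing to the limit (done in \cref{section6}).
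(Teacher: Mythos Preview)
Your overall architecture is right and close in spirit to the paper, but the paper takes a cleaner route at the crucial step: it does \emph{not} reabsorb the unbalanced term $v^{q+\alpha}$. Instead, it keeps the Sobolev/Caccioppoli estimate in the raw form
\[
\Bigl(\int_{B_{\rho_{n+1}}} v^{(p+\alpha_n)\frac{2^*}{2}}\Bigr)^{2/2^*}\leq \frac{C(p+\alpha_n)^2}{(\rho_n-\rho_{n+1})^2}\int_{B_{\rho_n}}\bigl(v^{q+\alpha_n}+1\bigr)
\]
and simply \emph{defines} the exponent recursion by matching output to next input, $(p+\alpha_n)\tfrac{2^*}{2}=q+\alpha_{n+1}$. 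This yields $Y_{n+1}\le C^n(Y_n+1)^{2^*/2}$ with $Y_n=\int_{B_{\rho_n}}v^{q+\alpha_n}$, and one iterates directly; the condition $\alpha_0>\frac{2q-2^*p}{2^*-2}$ then guarantees $\alpha_n\to\infty$. The base case is exactly the higher integrability estimate \cref{highinteg} with $\beta=\alpha_0+q-p$ (not $\beta=\alpha_0$ as you wrote), so the starting quantity is $\int v^{q+\alpha_0}$.

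Your interpolation--reabsorption route can be made to work, but as written it glosses over a real obstacle: after Young's inequality the term you want to absorb is $\bigl(\int_{B_R}v^{(p+\alpha)\chi}\bigr)^{1/\chi}$, while the left-hand side carries only $\bigl(\int_{B_r}v^{(p+\alpha)\chi}\bigr)^{1/\chi}$ (or the version with the cutoff $\eta^{2^*}$), and $B_r\subsetneq B_R$. You therefore cannot subtract directly; one must invoke the iteration lemma (\cref{iterlemma}) at \emph{each} step of the Moser scheme to close the estimate, which is more bookkeeping than ``paying a power of $(R-r)^{-1}$''. The paper's shifted-exponent iteration sidesteps this entirely. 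One more small correction: the hypothesis $q<p+2$ is not what makes $\alpha_0$ finite or the gain $2^*/2$ effective---those hold for any $p\le q$ once $\alpha_0$ is chosen large enough. The restriction $q<p+2$ enters \emph{only} through the base-case estimate \cref{highinteg}.
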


\begin{proof}
	We restate the Caccioppoli inequality from \cref{caccioppoli} as:
	\begin{align*}
		\int_B \left|\nabla\left(|\nabla u|^{\frac{p+\alpha}{2}}\right)\right|^2\eta^2\,dx\leq C(M,m)(p+\al)^2\left\{ \int_B\left(|\nabla u|^{q+\alpha}+|\nabla u|^{2+\alpha}\right)|\nabla \eta|^2\,dx \right\},
	\end{align*} which may be further revised to
\begin{align*}
	\int_B \left|\nabla\left(|\nabla u|^{\frac{p+\alpha}{2}}\eta\right)\right|^2\,dx\leq C(M,m)(p+\al)^2\left\{ \int_B\left(|\nabla u|^{q+\alpha}+1\right)|\nabla \eta|^2\,dx \right\}.
\end{align*}
By Sobolev's embedding, we have
\begin{align*}
	\left(\int_B|\nabla u|^{\frac{(p+\alpha){2^*}}{2}}\eta^{{2^*}}\,dx\right)^{{2}/{{2^*}}}\leq C(M,m)(p+\al)^2\left\{ \int_B\left(|\nabla u|^{q+\alpha}+1\right)|\nabla \eta|^2\,dx \right\}.
\end{align*}
Finally, for two concentric balls $B_{\rho}\subset B_{R}$ and $\eta\in C_0^\infty(B_R)$ satisfying $\eta\equiv 1$ in $B_{\rho}$ and $|\nabla \eta|\leq \frac{C}{R-r}$, we have
\begin{align}\label{eqn6.1}
	\left(\int_{B_{\rho}}|\nabla u|^{\frac{(p+\alpha){2^*}}{2}}\,dx\right)^{{2}/{{2^*}}}\leq \frac{C(M,m)(p+\al)^2}{(R-\rho)^2}\left\{ \int_{B_R}\left(|\nabla u|^{q+\alpha}+1\right)\,dx \right\}.
\end{align}

Now, for $n=1,2,3,\ldots$, we define $\rho_n :=\frac{R_0}{2}\left(1+\frac{1}{2^n}\right)$ and choose $\alpha_n$ to satisfy $(p+\alpha_n)\frac{2^*}{2}=\alpha_{n+1}+q$. Therefore, we have
\begin{align}\label{eqn6.2}
	\alpha_{n}=\left(\frac{{2^*}}{2}\right)^n\alpha_0+\left(\frac{{2^*} p}{2}-q\right)\frac{\left(\frac{{2^*}}{2}\right)^n-1}{\frac{{2^*}}{2}-1}.
\end{align}
From \cref{eqn6.2}, it is easy to see that if $\alpha_0+\frac{{2^*} p-2q}{{2^*} -2}>0$, then $\lim_{n\to\infty}\alpha_n=\infty$. Thus, we can rewrite \cref{eqn6.1} as 
\begin{align}\label{eqn6.3}
	\int_{B_{\rho_{n+1}}}|\nabla u|^{\alpha_{n+1}+q}\,dx\leq C^n\left\{ \int_{B_{\rho_n}}|\nabla u|^{q+\alpha_n}\,dx+1 \right\}^{{2^{*}}/2},
\end{align} where $C=C(R_0,\alpha_0,N,M,m,p,q)$ is independent of $n$. Defining $Y_n:=\int_{B_{\rho_{n}}}|\nabla u|^{\alpha_{n}+q}\,dx$, estimate \cref{eqn6.3} becomes
\begin{align}\label{iteration}
	Y_{n+1}\leq C^n (Y_n+1)^{\frac{{2^*}}{2\hfill}}.
\end{align}

By iterating \cref{iteration}, we get
\begin{equation*}\begin{array}{rcl}
	Y_{n+1}&\leq& C^n \left[ Y_n + 1 \right]^{{2^{*}}/2}\\
	&\leq& C^n \left\{ C^{n-1} \left[ Y_{n-1} + 1\right]^{{2^{*}}/2}+1 \right\}^{{2^{*}}/2}\\
	&\leq & C^{n+(n-1)\frac{{2^*}}{2}}2^{\frac{2^*}{2}}\left[Y_{n-1}+1\right]^{\left({{2^*}}/{2}\right)^2}\\
	&\vdots& \\
	&\leq& C^{\sum_{j=0}^n\left({{2^*}}/{2}\right)^j(n-j)}2^{\sum_{j=0}^n\left({{2^*}}/{2}\right)^{j+1}}\left[Y_0+1\right]^{\left({{2^*}}/{2}\right)^{n+1}}.
	\end{array}
\end{equation*}
As a result, we get
\begin{align}\label{finaliter}
	Y_n^{\frac{1}{\alpha_n+q}}\leq \left(C^{\sum_{j=0}^n\left(\frac{{2^*}}{2}\right)^j(n-j)}\right)^{\frac{1}{\alpha_n+q}}\left(2^{\sum_{j=0}^n\left(\frac{{2^*}}{2}\right)^{j+1}}\right)^{\frac{1}{\alpha_n+q}} \left[Y_0+1\right]^{\frac{{(2^*)}^n}{2^n(\alpha_n+q)}}.
\end{align}
We make the following three observations:
\begin{equation}\label{threeobs} \begin{array}{rcl}
	\frac{\left(\frac{{2^*}}{2}\right)^n}{\alpha_n+q}& =& \frac{\left(\frac{{2^*}}{2}\right)^n}{\left(\frac{{2^*}}{2}\right)^n\alpha_0+\left(\frac{{2^*} p-2q}{{2^*}-2}\right)\left(\left(\frac{{2^*}}{2}\right)^n-1\right)+q} \overset{n \nearrow \infty}{\longrightarrow} \frac{1}{\alpha_0+\left(\frac{{2^*} p-2q}{{2^*}-2}\right)}, \\
	\frac{\sum_{j=0}^{n-1}\left(\frac{{2^*}}{2}\right)^j(n-j)}{\alpha_n+q}& \leq & \frac{\left(\frac{\frac{{2^*}}{2}}{\left(\frac{{2^*}}{2}-1\right)^2}\right)\left(\left(\frac{{2^*}}{2}\right)^n-1\right)}{\left(\frac{{2^*}}{2}\right)^n\alpha_0+\left(\frac{{2^*} p-2q}{{2^*}-2}\right)\left(\left(\frac{{2^*}}{2}\right)^n-1\right)+q} \leq   C(N,\al_0,p,q) <\infty, \\
	\frac{\sum_{j=0}^{n-1}\left(\frac{{2^*}}{2}\right)^{j+1}}{\alpha_n+q}& \leq &  \frac{\left(\frac{{2^*}}{2}\right)\sum_{j=0}^{n-1}\left(\frac{{2^*}}{2}\right)^j(n-j)}{\alpha_n+q} \leq C(\al_0,N,p,q) <\infty.
	\end{array}
\end{equation} 
Hence, following the observations from \cref{threeobs} and passing to the limit as $n\to\infty$ in \cref{finaliter}, we get
\begin{align}\label{Lip1}
	||\nabla u||_{L^\infty(B_{R_0/2})}\leq C\left(\int_{B_{R_0}}|\nabla u|^{\alpha_0+q}\,dx+1\right)^{\frac{1}{\alpha_0+\left(\frac{{2^*} p-2q}{{2^*}-2}\right)}},
\end{align} where $C=C(R_0,\al_0,M,m,p,q,N)$.

Now, we make use of \cref{highinteg} with the choice of  $\beta=\alpha_0+q-p \geq 2$, estimate \cref{Lip1} becomes
\begin{align*}
	||\nabla u||_{L^\infty(B_{R_0/2})}\leq C\left(\Gamma_1+1\right)^{\frac{1}{\alpha_0+\left(\frac{{2^*} p-2q}{{2^*}-2}\right)}},
\end{align*} where
\begin{align*}
	\Gamma_1=R_0^N\left[\left(\frac{||U||_{L^\infty(2B)}}{R_0}\right)^{\frac{2(\alpha_0+q)}{p}}+\left(\frac{||U||_{L^\infty(2B)}}{R_0}\right)^{\frac{2(\alpha_0+q)}{2-q+p}}+\left(\frac{||U||_{L^\infty(2B)}}{R_0}\right)^{\alpha_0+q}\right].
\end{align*}
This completes the proof of the theorem.
\end{proof}
\section{Proof of \cref{maintheorem}}\label{section6}

	It remains to obtain the Lipschitz bound for $U$ from the Lipschitz bound \cref{lipbound1} for the regularized minimizer (recall \cref{remark_supression}). We follow the scheme of the proof in \cite{bellaRegularityMinimizersScalar2020}, which is similar to the double approximation procedure in \cite{espositoHigherIntegrabilityMinimizers1999}. Observe that
	\begin{align*}
	||\nabla u_{\sigma,\ve}||_{L^\infty(B_{R_0/2})}&\overset{\cref{lipbound1}}{\leq} C,
	\end{align*} where $C=C(p,q,N,M,m,||U||_{L^\infty(2B)})$. One can also obtain
	\begin{align*}
	m\int_B|\nabla u_{\sigma,\ve}|^p\,dx&\overset{\cref{hyp1}}{\leq}\int_{B}f(\nabla u_{\sigma,\ve})\,dx\leq\int_{B}f_{\sigma}(\nabla u_{\sigma,\ve})\,dx\overset{\text{\cref{mainprob}}}{\leq}\int_{B}f_{\sigma}(\nabla U_{\ve})\,dx\\
	&\overset{\cref{mainprob2}}{=}\int_{B}f(\nabla U_{\ve})\,dx+\frac{\sigma}{2}\int_{B}|\nabla U_{\epsilon}|^2\,dx\leq \int_{(1+\ve)B}f(\nabla U)\,dx+\frac{\sigma}{2}\int_{B}|\nabla U_{\epsilon}|^2\,dx,
\end{align*} where the last inequality follows from the convexity of $f$ and Jensen's inequality. Now, for fixed $\ve>0$, we can find $w_{\ve}\in U_{\ve}+ W^{1,p}_0(B)$ such that, for a subsequence, as $\sigma\to 0$, we have
$$\nabla u_{\sigma,\ve}\overset{*}{\rightharpoonup}\nabla w_{\ve}\mbox{ weak-$*$ in }L^\infty(B_{R_0/2}),$$
$$u_{\sigma,\ve}\rightharpoonup w_{\ve}\mbox{ in }W^{1,p}(B)\mbox{-weak}.$$ Passing to the limit, as $\sigma\to 0$, we obtain, on account of weak and weak-$*$ lower semicontinuity of norms,
\begin{align*}
		||\nabla w_{\ve}||_{L^\infty(B_{R_0/2})}&\leq C, \mbox{ and }
\end{align*}
\begin{align*}
	m\int_{B}|\nabla w_{\ve}|^p\,dx&\leq \int_{B}f(\nabla w_{\ve})\,dx\leq \int_{(1+\ve)B}f(\nabla U)\,dx.
\end{align*}Once again, using the fact that, for a subsequence, $w_{\ve}\rightharpoonup w$ in $U+W^{1,p}_0(B)$-weak, we obtain by lower semicontinuity,
\begin{align}\label{three}
	||\nabla w||_{L^\infty(B_{R_0/2})}&\leq C,\mbox{ and }
\end{align}
\begin{align}\label{four}
	m\int_{B}|\nabla w|^p\,dx&\leq \int_{B}f(\nabla w)\,dx\leq \int_{B}f(\nabla U)\,dx.
\end{align} 
Finally, by the strict convexity of $f$ (see~\cref{strictconvexityoff}), \cref{four} and the fact that $w \in U+W^{1,p}_0(B)$, we have $w=U$. Hence, the Lipschitz continuity of $U$ follows from \cref{three}.

\begin{remark}\label{strictconvexityoff}
    In order for uniqueness to hold, we want to make use of \cref{uniqueness} which requires strict convexity. But in our situation, we have the additional condition \cref{hyp3} and this implies strict convexity of the functional, as can be seen in the calculation from  \cite[Proof of Theorem 4.10]{beckLipschitzBoundsNonuniform2020}. 
\end{remark}


\bibliography{bibfile}
\bibliographystyle{plain}

\end{document}